\newtheorem{theorem}{Theorem}[section]
\newtheorem*{theorem*}{Theorem}
\newtheorem{lemma}[theorem]{Lemma}
\newtheorem{corollary}[theorem]{Corollary}
\theoremstyle{definition}
\theoremstyle{remark}
\newtheorem{remark}[theorem]{Remark}
\numberwithin{equation}{section}
\newcommand{\dx}{\ {\rm d} {x}}
\newcommand{\dt}{\ {\rm d} t }
\DeclareMathOperator{\Tr}{Tr}
\begin{document}

\title{Global well-posedness of the elastic-viscous-plastic sea-ice model with the inviscid Voigt-regularisation}

\author{Daniel W. Boutros\footnote{Department of Applied Mathematics and Theoretical Physics, University of Cambridge, Cambridge CB3 0WA UK. Email: \textsf{dwb42@cam.ac.uk}}\space,\space Xin Liu\footnote{Department of Mathematics, Texas A\&M University, College Station, TX 77843-3368, USA. Email: \textsf{xliu23@tamu.edu}}\space,\space Marita Thomas\footnote{Department of Mathematics and Computer Science, Freie Universität Berlin, Arnimallee 9, 14195 Berlin, Germany; Weierstrass Institute for Applied Analysis and Stochastics, Mohrenstr. 39, 10117 Berlin, Germany. Email: \textsf{marita.thomas@fu-berlin.de}}\space, and Edriss S. Titi\footnote{Department of Applied Mathematics and Theoretical Physics, University of Cambridge, Cambridge CB3 0WA UK; Department of Mathematics, Texas A\&M University, College Station, TX 77843-3368, USA; also Department of Computer Science and Applied Mathematics, Weizmann Institute of Science, Rehovot 76100, Israel. Emails: \textsf{Edriss.Titi@maths.cam.ac.uk} \; \textsf{titi@math.tamu.edu} \; \textsf{edriss.titi@weizmann.ac.il}}}

\date{January 16, 2026}

\maketitle

\begin{abstract}
In this paper, we initiate the rigorous mathematical analysis of the elastic-viscous-plastic (EVP) sea-ice model, which was introduced in \textit{[E. C. Hunke and J. K. Dukowicz, J. Phys. Oceanogr., 27, 9 (1997), 1849--1867]}. The EVP model is one of the standard and most commonly used dynamical sea-ice models. We study a regularised version of this model. In particular, we prove the global well-posedness of the EVP model with the inviscid Voigt-regularisation of the evolution equation for the stress tensor. Due to the elastic relaxation and the Voigt regularisation, we are able to handle the case of viscosity coefficients without cutoff, which has been a major issue and a setback in the computational study and analysis of the related Hibler sea-ice model, which was originally introduced in \textit{[W. D. Hibler, J. Phys. Oceanogr., 9, 4 (1979), 815–-846]}. It is worth noting that the EVP model shares some structural characteristics with the Oldroyd-B model and related models for viscoelastic non-Newtonian complex fluids. 
\end{abstract}

\noindent \textbf{Keywords:} Voigt regularisation, global well-posedness, sea ice, elastic-viscous-plastic rheology, Hibler's sea-ice model, non-Newtonian flows, viscoplasticity, complex fluids

\vspace{0.1cm} \noindent \textbf{Mathematics Subject Classification:} 86A40 (primary), 35A01, 35A02, 35B65, 35Q86, 74D10, 74H20, 74H25, 74H30, 86A08 (secondary) 

\section{Introduction}
\subsection{Formulation of the EVP model}

Sea-ice dynamics has a strong influence on the global climate, for instance, through heat exchange with the atmosphere and the ocean around the Arctic area as well as the reflection of sunlight. Despite the development of sea-ice modelling over many decades, its complexity continues to pose significant challenges for numerical simulations of the sea-ice cover (cf. \cite{golden}). For these reasons, an enhanced theoretical understanding of the fundamental dynamical sea-ice models and their rigorous mathematical analysis is of great importance and substantial interest.

Over several decades there have been significant advances in the rigorous mathematical analysis of oceanic and atmospheric dynamics. In contrast, there have been relatively few works which obtain rigorous results in the mathematical study of sea-ice dynamics. It is the purpose of this paper to commence the rigorous mathematical analysis of the elastic-viscous-plastic (EVP) dynamical sea-ice model. 

The EVP model was introduced in \cite{hunke} as a numerical regularisation of the Hibler model from \cite{hibler}. The latter model describes sea-ice dynamics by using a viscous-plastic rheology, and the viscosity becomes singular for small strain rates and degenerate for large strain rates. 

We will denote the symmetric part of the gradient $ D(u) $ by 
\begin{equation}\label{def:symty-gradient}
    D(u) \coloneqq \frac{1}{2} \lbrack \nabla u + (\nabla u)^\top \rbrack.
 \end{equation}
The strain rate is given by 
\begin{equation}
\label{originalstrainrate}
\begin{aligned}
\overline{\mathcal D} \coloneqq \biggl( \frac{2}{\overline e^2} \bigg\vert D(u) - \frac{1}{2} [\Tr D(u)] \mathbb I_2 \bigg\vert^2 + \big\vert \Tr D(u) \big\vert^2 \biggr)^{1/2}. 
\end{aligned}
\end{equation}
Here $ u : \mathbb{T}^2 \times [0,T] \rightarrow \mathbb{R}^2$ is the unknown velocity and $ \overline e > 1 $ is a constant, representing the ratio of the major axis to minor axis lengths of the elliptic yield curve. Note that throughout this paper we will use periodic boundary conditions and take our domain to be the two-dimensional flat unit torus $\mathbb{T}^2$ (i.e. $[0,1]^2$). 

Then, the momentum equations of the Hibler sea-ice model \cite{hibler} can be written as
\begin{subequations}\label{sys:hibler}
\begin{gather}
\label{eq:hibler-momentum}
m \partial_t u + m u \cdot \nabla u = \nabla \cdot \sigma + \mathcal{T}_a + \mathcal{T}_w + m \Omega u^\perp - m g \nabla H_0, \\
\intertext{with the constitutive law for the stress tensor given by}
\dfrac{\overline e^2 \overline{\mathcal D}}{P} (\sigma - \frac{1}{2} \Tr \sigma \mathbb I_2) + \dfrac{\overline{\mathcal D}}{2P} \Tr \sigma \mathbb I_2 + \dfrac{\overline{\mathcal D}}{2} \mathbb I_2  = D(u).\label{eq:hibler-strain-rate}
\end{gather}
Here $\sigma : \mathbb{T}^2 \times [0,T] \rightarrow \mathbb{R}^{2 \times 2}$ is the stress tensor, and $ P > 0, m > 0 $, and $ \Omega $ are given constants which denote the pressure, the mass per unit area and the coefficient of the Coriolis force, respectively. The functions $\mathcal T_a$ and $\mathcal T_w$ denote the wind and ocean stresses, and are defined in equations \eqref{def:windstress} and \eqref{def:oceanstress} below, respectively. The given function $ H_0$ denotes the ocean surface topography. We also recall the notation $u^\perp = (-u_2, u_1)^\top$. In order to fix ideas, we have omitted the thermodynamical equations (i.e. we assume the mean ice thickness $h$ and the ice compactness $A$ to be constant) and purely focus on the dynamical part of the EVP model (as is also done for the EVP model as originally introduced in the paper \cite{hunke}). Furthermore, we assume that $ P $ and $m$ are given constants (as we have taken $h$ and $A$ to be constant). The analysis of the EVP model coupled with the thermodynamical equations is a subject of future work. 

One can easily check from \eqref{eq:hibler-strain-rate} that $ \sigma $ becomes singular as the strain rate $ \overline{\mathcal D} $ (see \eqref{originalstrainrate}) approaches zero, and becomes degenerate as $ \overline{\mathcal D} $ approaches infinity. This special structure, in particular, causes significant difficulties for both the analysis as well as the numerical simulations. In fact, in Hibler's original paper \cite{hibler}, a cutoff of $\overline{\mathcal D}$ is introduced to avoid this very problem. 

\smallskip 

In order to facilitate the usage of explicit numerical schemes and therefore also of parallel computing, it was proposed in \cite{hunke} to modify the viscous-plastic sea-ice rheology \eqref{eq:hibler-strain-rate} by adding an artificial elastic component. That is, one replaces the diagnostic, constitutive equation \eqref{eq:hibler-strain-rate} by the prognostic, dynamically relaxed equation for the stress tensor
\begin{equation} \label{eq:evp-strain-rate}
    \dfrac{1}{\mathcal E} \partial_t \sigma + \dfrac{\overline e^2 \overline{\mathcal D}}{P} (\sigma - \frac{1}{2} \Tr \sigma \mathbb I_2) + \dfrac{\overline{\mathcal D}}{2P} \Tr \sigma \mathbb I_2 + \dfrac{\overline{\mathcal D}}{2} \mathbb I_2  = D(u),
\end{equation}
where $ \mathcal E > 0 $ is the (artificial) elastic modulus, which is constant. Formally, by sending $ \mathcal E \rightarrow \infty $ in \eqref{eq:evp-strain-rate}, one can recover \eqref{eq:hibler-strain-rate}. In addition, we will show in this paper, that a cutoff of the strain rate $\overline{\mathcal D}$ is not required in the dynamical case with equation \eqref{eq:evp-strain-rate} at hand for $ \mathcal E > 0 $. Note that in contrast to that of the Hibler model, the stress tensor of the EVP model has become an independent dynamical quantity, which requires its own initial condition which need to be chosen adequately in numerical simulations. Moreover, the parabolic structure of the full Hibler model \eqref{eq:hibler-momentum}-\eqref{eq:hibler-strain-rate} is lost due to this modification, although it reemerges as a damping term in the EVP model.

The EVP model, which is given by equations \eqref{eq:hibler-momentum} and \eqref{eq:evp-strain-rate}, is one of the most important dynamical sea-ice models and is a fundamental component of many climate models \cite{bouillon2009,golden,koldunov}, including the Los Alamos sea-ice model (CICE) \cite{lemieux2024}, the Louvain-la-Neuve (LIM2) sea-ice model \cite{bouillon2009}, the Massachusetts Institute of Technology general circulation model (MITgcm) \cite{losch2010}, and the Finite-Volume sea-ice–ocean model (FESOM) \cite{danilov} (an overview of several models can be found in \cite{hunke2020}). Furthermore, 
the wind stress $ \mathcal T_a $ and the ocean stress $ \mathcal T_w $ are defined as follows
\begin{align}
\label{def:windstress}
\mathcal{T}_{a} & \coloneqq c_a \rho_a \lvert U_a \rvert \bigg(U_a \cos \phi + U_a^\perp \sin \phi \bigg), \\
\label{def:oceanstress}
\mathcal{T}_{w} & \coloneqq c_w \rho_w \lvert U_w - u \rvert \bigg[ (U_w - u) \cos \theta + (U_w - u)^\perp \sin \theta \bigg],
\end{align}
\end{subequations}
respectively. 
The other notation is introduced in Table \ref{notationtable}. For the purposes of this paper, the geostrophic wind and ocean currents $U_a$ and $U_w$ can be arbitrary functions in $C([0,T]; H^4 (\mathbb{T}^2))$, but we note that the ocean velocity $U_w$ is typically chosen as follows, cf. e.g. \cite{hunkelinearization,danilov},
\begin{equation} \label{oceanvelocity}
U_w = \begin{pmatrix}
\frac{0.1 (2 y - L)}{L} \\
\frac{-0.1 (2x - L)}{L}
\end{pmatrix},
\end{equation}
where $L$ is the length scale of the domain $[0,L]^2$. In this paper we will take $L = 1$. Note that this example is not a spatially periodic vector field. 
Similarly, a typical wind velocity is given by (cf. \cite{hunkelinearization})
\begin{equation} \label{windvelocity}
U_a = \begin{pmatrix}
5 + \left[ \sin \left( \frac{2 \pi t}{T} \right) - 3 \right] \sin \left( \frac{2 \pi x}{L} \right) \sin \left( \frac{\pi y}{L} \right) \\
5 + \left[ \sin \left( \frac{2 \pi t}{T} \right) - 3 \right] \sin \left( \frac{2 \pi y}{L} \right) \sin \left( \frac{\pi x}{L} \right)
\end{pmatrix}.
\end{equation}

The goal of this paper is to establish the global well-posedness of a regularised version of the EVP system, i.e. equations \eqref{eq:hibler-momentum} and \eqref{eq:evp-strain-rate}-\eqref{def:oceanstress}. To simplify our presentation, we will fix $ \overline e \equiv 2 $ in the rest of this paper, as is commonly done in the sea-ice modelling literature. However, we emphasise that the results of this paper do not depend on the choice of $\overline{e}$.

\begin{table}[H]
    \centering
\begin{tabular}{ |p{2cm}||p{5cm}|p{3cm}|  }
\hline
Symbol & Meaning & Typical value\\
\hline
$c$ & ice compactness (area covered by `thick' ice) & $0 \leq c \leq 1$ \\
$c_a$ & air drag coefficient & $1.2 \cdot 10^{-3}$ \\
$c_w$ & ocean drag coefficient & $5.5 \cdot 10^{-3}$ \\
$\overline{e}$ & ratio of major axis to minor axis lengths of the elliptic yield curve & 2 \\
$\mathcal{E}$ & elastic modulus & $0.25$ \\
$g$ & gravitational constant & $9.81 \; \text{m\,s}^{-1}$ \\
$H_0$ & sea surface height & \\
$H$ & height of `thick` ice & \\
$m$ &  mass per unit area & \\
$\Omega$ &  rotation parameter & $1.46 \cdot 10^{-4} \; \text{s}^{-1}$ \\
$P$ & internal ice strength & \\
$P_0$ & internal ice strength parameter & $27.5 \cdot 10^3 \; \text{N/m}^3$ \\
$\phi$ & air turning angle & $25^\circ$ \\
$\rho_a$ & air density & $1.3 \; \text{kg/m}^3$ \\
$\rho_w$ & ocean water density & $1026 \; \text{kg/m}^3$ \\
$\theta$ & water turning angle & $25^\circ$ \\
$U_a$ & geostrophic wind & equation \eqref{windvelocity} \\
$U_w$ & geostrophic ocean current & equation \eqref{oceanvelocity} \\
 \hline
\end{tabular}
    \caption{Index of notation and typical values of several of the parameters, which are taken from \cite{hunke,hunkelinearization,mehlmann,lemieux}.}
    \label{notationtable}
\end{table}

\subsection{The linear ill-posedness of the unregularised EVP model} \label{illposednesssection}

Before we state the main result of this paper, we emphasise here that a regularisation of the EVP model is necessary to establish the well-posedness in Sobolev spaces. Indeed, consider the following one-dimensional version of the EVP model
\begin{subequations}\label{sys:1-d-evp}
\begin{align}
&\partial_t u  = \partial_x \sigma ,  \\
&\partial_t \sigma + \frac{5 \sqrt{\lvert \partial_x u \rvert^2 + \epsilon^2}}{2 P} \sigma  + \frac{\sqrt{\lvert \partial_x u \rvert^2 + \epsilon^2}}{2} = \partial_x u,
\end{align}
\end{subequations}
where $u$ and $\sigma$ are now scalar quantities, and we have regularised and simplified the strain rate $\overline{\mathcal{D}} = \sqrt{\frac{5}{4}} \lvert \partial_x u \rvert$ from \eqref{originalstrainrate} by $\sqrt{\lvert \partial_x u \rvert^2 + \epsilon^2}$ for some $\epsilon > 0$. Note that we have omitted the external forces as well as the advection term (cf. point $ (\romannumeral 2) $ of Remark \ref{rm:thm}). By linearising system \eqref{sys:1-d-evp} around a given solution $(\overline{u}, \overline{\sigma})$ to \eqref{sys:1-d-evp} we obtain the following system of equations
\begin{subequations}
\begin{align}
&\partial_t u_l = \partial_x \sigma_l \label{linearisedvelocityeq}, \\
&\partial_t \sigma_l = - \frac{5}{2P} \sqrt{\lvert \partial_x \overline{u}  \rvert^2 + \epsilon^2} \sigma_l + \bigg[ 1 - \frac{5}{2P} \frac{\overline{\sigma} \partial_x \overline{u}}{\sqrt{\lvert \partial_x \overline{u}  \rvert^2 + \epsilon^2}} - \frac{1}{2} \frac{\partial_x \overline{u} }{\sqrt{\lvert \partial_x \overline{u}  \rvert^2 + \epsilon^2}} \bigg] \partial_x u_l \label{linearisedstresseq},
\end{align}
\end{subequations}
where $(u_l,\sigma_l)$ is the solution of the linearised system \eqref{linearisedvelocityeq}-\eqref{linearisedstresseq}. Then, after taking a spatial derivative of equation \eqref{linearisedstresseq} and using equation \eqref{linearisedvelocityeq}, one has
\begin{equation}
\label{eq:linear-illposedness}
\begin{aligned}
\partial_{tt} u_l = &- \frac{5}{2P} \sqrt{\lvert \partial_x \overline{u}  \rvert^2 + \epsilon^2} \partial_t u_l + \bigg[ 1 - \frac{5}{2P} \frac{\overline{\sigma} \partial_x \overline{u}}{\sqrt{\lvert \partial_x \overline{u}  \rvert^2 + \epsilon^2}} - \frac{1}{2} \frac{\partial_x \overline{u} }{\sqrt{\lvert \partial_x \overline{u}  \rvert^2 + \epsilon^2}} \bigg] \partial_{xx} u_l \\
&- \frac{5}{2P} \frac{\partial_x \overline{u} \partial_{xx} \overline{u}}{\sqrt{\lvert \partial_x \overline{u}  \rvert^2 + \epsilon^2}} \sigma_l - \frac{\partial}{\partial x} \bigg[ \frac{5}{2P} \frac{\overline{\sigma} \partial_x \overline{u}}{\sqrt{\lvert \partial_x \overline{u}  \rvert^2 + \epsilon^2}} + \frac{1}{2} \frac{\partial_x \overline{u} }{\sqrt{\lvert \partial_x \overline{u}  \rvert^2 + \epsilon^2}} \bigg] \partial_x u_l.
\end{aligned}
\end{equation}
Therefore, if at a space-time point $(x_0, t_0) \in \mathbb{T} \times [0,T]$ we have
\begin{equation}
1 - \frac{5}{2P} \frac{\overline{\sigma} \partial_x \overline{u}}{\sqrt{\lvert \partial_x \overline{u}  \rvert^2 + \epsilon^2}} - \frac{1}{2} \frac{\partial_x \overline{u} }{\sqrt{\lvert \partial_x \overline{u}  \rvert^2 + \epsilon^2}} < 0,
\end{equation}
then equation \eqref{eq:linear-illposedness} becomes locally elliptic near the point $(x_0, t_0)$, which leads to an instability and therefore the ill-posedness in the sense of Hadamard \cite{hadamard}. In a forthcoming paper \cite{boutrosillposedness}, we will address the linear ill-posedness of the (unregularised) EVP model in all Sobolev spaces in a rigorous manner using this new instability (ill-posedness) that we have discussed above. 
\begin{remark}
We emphasise that the instability described above is fundamentally different than the (formal) ill-posedness/instability results that have been reported in the sea-ice modelling/physics literature \cite{pritchard,guba,gray1995,gray1999}. These results rely crucially on the absence of the viscosity cutoff (i.e. $\epsilon = 0$) and are found mainly for the Hibler model (with the exception of \cite{lipscomb,williams}, which consider the (revised) EVP model as well). The rigorous analysis in \cite{liu} demonstrates that the linear ill-posedness is not present in the case of the Hibler model with $\epsilon > 0$. The linear instability of the EVP model described in this section is present regardless of the value of $\epsilon$ and therefore pertains to a new kind of instability that has not been reported in the literature before, and in particular it does not apply to the Hibler model. 
\end{remark}

\subsection{The main result of this paper}
We consider the following regularisation of the EVP model (i.e. equations \eqref{eq:hibler-momentum} and \eqref{eq:evp-strain-rate}):
\begin{subequations}
\begin{gather}
\partial_t u  = \nabla \cdot \sigma + \mathcal{T}_a + \mathcal{T}_w + \Omega u^\perp - g \nabla H_0, \label{voigtregularisation1} \\
\dfrac{1}{\mathcal E} \partial_t \bigl( \sigma - \alpha^2 \Delta \sigma \bigr) + \dfrac{4 \overline{\mathcal D}}{P} (\sigma - \frac{1}{2} \Tr \sigma \mathbb I_2) + \dfrac{\overline{\mathcal D}}{2P} \Tr \sigma \mathbb I_2 + \dfrac{\overline{\mathcal D}}{2} \mathbb I_2  = D(u), \label{voigtregularisation2}
\\
u \lvert_{t = 0} = u_0, \quad \sigma \lvert_{t = 0} = \sigma_0. \label{voigtregularisation3}
\end{gather}
\end{subequations}
In this model we have included the Voigt regularisation on the elastic part of the rheology in the EVP model \cite{hunke} by means of the inclusion of the term $-\alpha^2 \partial_t \Delta \sigma$ for some fixed $\alpha > 0$ in equation \eqref{voigtregularisation2}. The drag forces $\mathcal{T}_a$ and $\mathcal{T}_w$ are taken as in equations \eqref{def:windstress} and \eqref{def:oceanstress}, respectively. Moreover, for the sake of simplicity we have chosen $m \equiv 1$. Note that throughout this paper we will assume that $\sigma (x,t)$ is a symmetric matrix, and in Lemma \ref{invariancelemma} we show rigorously that the stress tensor $\sigma$ remains symmetric if $\sigma_0$, i.e. initially, is symmetric. Next, we will simplify the strain rate $\overline{\mathcal{D}}$ from equation \eqref{originalstrainrate} by replacing it with
\begin{equation} 
\label{def:simplified_strain_rate}
\mathcal{D} \coloneqq \lvert D(u) \rvert,
\end{equation}
without fundamentally changing the analysis presented below, see Section \ref{conclusion} for more discussion. The system we will therefore consider in this paper, which we call the Voigt-EVP system, is given by
\begin{subequations}\label{sys:limitEVP}
\begin{gather}
\partial_t u  = \nabla \cdot \sigma + \mathcal{T}_a + \mathcal{T}_w + \Omega u^\perp - g \nabla H_0, \label{limitEVP1} \\
\dfrac{1}{\mathcal E} \partial_t \bigl( \sigma - \alpha^2 \Delta \sigma \bigr) + \dfrac{4 {\mathcal D}}{P} (\sigma - \frac{1}{2} \Tr \sigma \mathbb I_2) + \dfrac{{\mathcal D}}{2P} \Tr \sigma \mathbb I_2 + \dfrac{{\mathcal D}}{2} \mathbb I_2  = D(u),
\label{limitEVP2}  \\
u \lvert_{t = 0} = u_0, \quad \sigma \lvert_{t = 0} = \sigma_0. \label{limitEVP3}
\end{gather}
\end{subequations}

Now we are ready to state our main result, i.e., the global well-posedness of the Voigt-EVP system: 
\begin{theorem} \label{regularisedevplimit}
Let $u_0 \in H^2 (\mathbb{T}^2)$ and $\sigma_0 \in H^3 (\mathbb{T}^2)$ (which is symmetric) be given. Moreover, we assume that $ P > 0 $ is a given constant, $U_a, U_w, H_0 \in C([0,T]; H^4 (\mathbb{T}^2))$ and we let $\theta \in \left[0, \frac{\pi}{4} \right]$. The Voigt-EVP system \eqref{sys:limitEVP} has a unique global strong solution $(u,\sigma)$ on the time interval $[0,T]$ for any $T > 0$. The solution $(u,\sigma)$ enjoys the following regularity
\begin{equation*}
u \in C ([0,T]; H^2 (\mathbb{T}^2)), \quad \sigma \in C ([0,T]; H^3 (\mathbb{T}^2)),
\end{equation*}
and $\sigma$ is symmetric. In addition, we have the estimate
\begin{equation*}
\begin{gathered}
\lVert u \rVert_{C([0,T]; H^2 (\mathbb{T}^2))} + \lVert \sigma \rVert_{C([0,T]; H^3 (\mathbb{T}^2))} + \lVert \partial_t u \rVert_{L^\infty ((0,T); H^2 (\mathbb{T}^2))} + \lVert \partial_t \sigma \rVert_{L^\infty ((0,T); H^3 (\mathbb{T}^2))} \\ \leq C \exp (\exp (\exp (C T))),
\end{gathered}
\end{equation*}
where $C > 0$ is a constant, depending on $\mathcal E, \alpha, U_a, U_w, g, H_0, u_0, \sigma_0, c_a, c_w, \rho_a, \rho_w, P, \phi$, and $\theta$.
\end{theorem}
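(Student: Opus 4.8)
The plan is to treat \eqref{sys:limitEVP} as an abstract evolution system in which the Voigt term supplies the \emph{only} smoothing, and to close a nested hierarchy of a priori estimates by energy methods. The starting observation is that $\mathcal K\coloneqq(I-\alpha^2\Delta)^{-1}$ is a self-adjoint, positive operator on $\mathbb T^2$ (diagonal in the Fourier basis) that gains two derivatives, mapping $H^s$ into $H^{s+2}$ boundedly, so that \eqref{limitEVP2} can be written as
\begin{equation*}
\partial_t\sigma=\mathcal E\,\mathcal K\Bigl[D(u)-\tfrac{4\mathcal D}{P}\bigl(\sigma-\tfrac12\Tr\sigma\,\mathbb I_2\bigr)-\tfrac{\mathcal D}{2P}\Tr\sigma\,\mathbb I_2-\tfrac{\mathcal D}{2}\mathbb I_2\Bigr].
\end{equation*}
The two derivatives gained by $\mathcal K$ exactly compensate the single derivative lost in $D(u)=\tfrac12(\nabla u+\nabla u^\top)$: the bracket lies in $H^1$ whenever $u\in H^2$ and $\sigma\in H^3$, so $\partial_t\sigma\in H^3$, while $\nabla\cdot\sigma\in H^2$ gives $\partial_t u\in H^2$. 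This is the mechanism that renders the target regularity $(u,\sigma)\in H^2\times H^3$ self-consistent and dictates that $\sigma$ be one degree smoother than $u$.

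For existence I would construct approximate solutions by Galerkin truncation $P_N$ onto the first $N$ Fourier modes. The truncated system is a finite-dimensional ODE whose right-hand side is locally Lipschitz (the factor $M\mapsto|M|$ is globally Lipschitz, and all norms are equivalent in finite dimensions), so the Picard--Lindel\"of theorem yields a local solution, and symmetry of $\sigma_N$ is preserved exactly as in Lemma \ref{invariancelemma}. The heart of the argument is a family of estimates, uniform in $N$, organised in three levels: $L^2$, then $H^1$, then $H^2$ for $u$ (correspondingly $H^1$, $H^2$, $H^3$ for $\sigma$, the top derivative of $\sigma$ always furnished by the Voigt term through $\tfrac{\alpha^2}{\mathcal E}\tfrac{d}{dt}\|\nabla\partial^\beta\sigma\|^2$). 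At each level I contract the velocity and stress equations with the matching derivatives of $u$ and $\sigma$ and add them: the coupling terms $\int(\nabla\cdot\sigma)\cdot\partial^\beta u$ and $\int D(u):\partial^\beta\sigma$ cancel after one integration by parts using the symmetry of $\sigma$; the quadratic terms $\tfrac{4\mathcal D}{P}|\sigma^\circ|^2$ and $\tfrac{\mathcal D}{2P}(\Tr\sigma)^2$ (with $\sigma^\circ\coloneqq\sigma-\tfrac12\Tr\sigma\,\mathbb I_2$) are nonnegative and provide damping; and the ocean drag contributes the dissipative term $-c_w\rho_w\cos\theta\int|U_w-u|\,|\partial^\beta u|^2$ at top order, whose sign dominates the accompanying rotational contribution precisely because $\theta\in[0,\tfrac\pi4]$ forces $\cos\theta\ge\sin\theta$.

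Two technical points are central. First, to avoid losing derivatives one never integrates by parts so as to place more than two derivatives on $u$ or more than three on $\sigma$; the top-order terms containing $\mathcal D=|D(u)|$ (for instance $\int\partial^2\mathcal D\,\Tr\partial^2\sigma$ and the commutators $\int\bigl(\partial^\beta(\mathcal D\sigma^\circ)-\mathcal D\,\partial^\beta\sigma^\circ\bigr):\partial^\beta\sigma$) are re-balanced by moving a derivative from the $\mathcal D$-factor onto $\sigma$, and then estimated by $\|u\|_{H^2}$ times powers of $\|\sigma\|_{H^3}$ via the embeddings $H^3(\mathbb T^2)\hookrightarrow W^{1,\infty}$ and $H^2(\mathbb T^2)\hookrightarrow L^\infty\cap W^{1,4}$. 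Second, the genuinely superlinear contributions — the cubic terms from the second derivative of the degree-two, $C^{1,1}$ drag nonlinearity $|U_w-u|(U_w-u)$ and from the stress commutators — are split by the two-dimensional Ladyzhenskaya inequality $\|f\|_{L^4}^2\lesssim\|f\|_{L^2}\|f\|_{H^1}$ into (highest norm)$^2\times$(strictly lower norm already bounded at the previous level). This both prevents a Riccati-type finite-time blow-up and produces the three successive exponentials: each level is a linear differential inequality whose coefficient is the bound from the level below, so iterating $\exp(\int(\cdot))$ three times yields exactly $C\exp(\exp(\exp(CT)))$.

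With the uniform bounds in hand, the passage $N\to\infty$ is carried out by extracting a subsequence and invoking the Aubin--Lions--Simon compactness lemma (the time derivatives $\partial_t u_N,\partial_t\sigma_N$ being bounded in $H^2,H^3$), giving strong convergence sufficient to pass to the limit in every nonlinear term, including $|D(u_N)|\to|D(u)|$ by continuity of $M\mapsto|M|$; the limit is a strong solution of the stated regularity and symmetry, with $C([0,T];\cdot)$ continuity following from the $\partial_t$-bounds. Uniqueness I would prove separately by a low-order difference estimate at the level $L^2\times H^1$: the nonlinearities enter only through Lipschitz factors, since $\bigl||D(u_1)|-|D(u_2)|\bigr|\le|D(u_1-u_2)|$ and analogously for the drag, so a single Grönwall argument closes it even though these nonlinearities are merely $C^{0,1}$ (which is also why a direct fixed point is not used for existence). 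Global existence is then immediate: the a priori bound, though triple-exponential, is finite on every $[0,T]$, so the local solution continues for all time. I expect the main obstacle to be the top-order estimate, where one must \emph{simultaneously} keep the derivative count balanced (at most two on $u$, three on $\sigma$) through the coupling and the $\mathcal D$-dependent terms, and interpolate the superlinear terms so the highest norm appears only quadratically — it is the interplay of these two requirements that both closes the estimate and forces the nested-exponential growth.
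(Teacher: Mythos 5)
Your overall architecture --- Galerkin truncation, a three-level energy hierarchy in which the Voigt term supplies the extra derivative on $\sigma$, the cancellation of the coupling terms, the sign condition $\cos\theta\ge\sin\theta$, Aubin--Lions compactness, and a low-order difference estimate for uniqueness --- coincides with the paper's. But there is a genuine gap at the heart of the $H^1$ and $H^2$ estimates. You claim that after the Ladyzhenskaya splitting ``each level is a linear differential inequality whose coefficient is the bound from the level below.'' This fails for the stress-relaxation terms. At the $H^1$ level one must control $\int_{\mathbb T^2}\mathcal D\,\bigl(\sigma-\tfrac12\Tr\sigma\,\mathbb I_2\bigr):\Delta\sigma\dx$. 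The natural factorisation is $\lVert\mathcal D\rVert_{L^2}\lVert\sigma\rVert_{L^\infty}\lVert\Delta\sigma\rVert_{L^2}$, where $\lVert\mathcal D\rVert_{L^2}\lesssim\lVert\nabla u\rVert_{L^2}$ and $\lVert\Delta\sigma\rVert_{L^2}$ are both \emph{current-level} quantities; the coefficient is therefore $\lVert\sigma\rVert_{L^\infty}$, which the previous ($L^2$/Voigt) level does \emph{not} control, since it only gives $\sigma\in H^1$ and $H^1(\mathbb T^2)\not\hookrightarrow L^\infty$. Your proposed Ladyzhenskaya route, $\lVert\mathcal D\rVert_{L^4}\lVert\sigma\rVert_{L^4}\lVert\Delta\sigma\rVert_{L^2}$, is no better, because $\lVert\nabla u\rVert_{L^4}\lesssim\lVert\nabla u\rVert_{L^2}^{1/2}\lVert\Delta u\rVert_{L^2}^{1/2}+\lVert\nabla u\rVert_{L^2}$ drags in the next level's norm $\lVert\Delta u\rVert_{L^2}$; and Agmon's inequality $\lVert\sigma\rVert_{L^\infty}\lesssim\lVert\sigma\rVert_{L^2}^{1/2}\lVert\sigma\rVert_{H^2}^{1/2}$ produces a Riccati-type inequality with finite-time blow-up. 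The paper closes this step with the Br\'ezis--Gallou\"et--Wainger inequality \eqref{ineq:bgw}, which bounds $\lVert\sigma\rVert_{L^\infty}$ by the previous-level norm times $\sqrt{\log(e+C\lVert\sigma\rVert_{H^2})}$, yielding a \emph{log-superlinear} differential inequality that is then handled by the logarithmic Gr\"onwall Lemma \ref{logarithmicgronwall}; the same device is needed for $\lVert\sigma\rVert_{L^\infty}$ and $\lVert u\rVert_{L^\infty}$ at the $H^2$ level. This is the key ingredient missing from your plan, and it --- not an iteration of linear Gr\"onwall steps --- is what actually produces the nested exponentials in the final bound.

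A second, more repairable, divergence: you work directly with $\mathcal D=\lvert D(u)\rvert$, whereas the paper first proves global well-posedness of the intermediate system \eqref{sys:simplifiedmodel} with $\mathcal D_\epsilon=\sqrt{\lvert D(u)\rvert^2+\epsilon^2}$, with bounds uniform in $\epsilon$, and then lets $\epsilon\to0$. The regularisation is what licenses the pointwise identity $\nabla\mathcal D_\epsilon=\mathcal D_\epsilon^{-1}\,D(u):\nabla D(u)$ and the uniform bound $\lvert\nabla\mathcal D_\epsilon\rvert\le\lvert\nabla D(u)\rvert$ used at top order. Your expression $\int\partial^2\mathcal D\,\Tr\partial^2\sigma$ is not meaningful at $\epsilon=0$: the map $M\mapsto\lvert M\rvert$ is only Lipschitz, so $\mathcal D$ admits at most one weak derivative, and you must arrange --- as the paper does in its $H^2$ estimate --- that never more than one derivative lands on $\mathcal D$, invoking in addition the weak chain rule for Lipschitz compositions to justify $\lvert\nabla\lvert D(u)\rvert\rvert\le\lvert\nabla D(u)\rvert$ almost everywhere. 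The paper's detour through $\epsilon>0$ avoids this delicacy entirely and yields Corollary \ref{cor:intermediatesys} as a byproduct.
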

We note that the superexponential bound from Theorem \ref{regularisedevplimit} originates from the repeated use of (logarithmic) Grönwall inequalities for the a priori estimates for the $L^2 (\mathbb{T}^2)$, $\dot{H}^1 (\mathbb{T}^2)$ and $\dot{H}^2 (\mathbb{T}^2)$ norms.
\begin{remark}\label{rm:thm}
\begin{enumerate}[label=(\roman{enumi})]
    \item The Voigt regularisation in equation \eqref{limitEVP2} is in the same spirit as the original numerical regularisation of the Hibler model from \cite{hunke}. 
    Namely, we seek an inviscid regularisation of the EVP model, which is suitable for numerical computations. The advantage of the Voigt regularisation is that  
    on the one hand, like the original EVP model, it shares the same steady state(s) with the Hibler model. On the other hand, the Voigt regularisation constitutes a relatively modest (nonviscous) modification of the (artificial) elastic component of the rheology in the EVP model. 
    
    \item For the momentum equation of the Voigt-EVP model, we have omitted the advection term, in which we follow the approach in \cite{hunke}. As it has been remarked in \cite{hunke}, the advection term is at least one order of magnitude smaller than the acceleration term and therefore it has not been included in the EVP model. In the evolution equation for the stress tensor we also do not include an advection term, as it would affect the steady states of the EVP model, which would then lose its purpose as an approximation of the Hibler model. 

    \item We note that the assumption $\theta \in \left[0, \frac{\pi}{4} \right]$ on the water turning angle, lies within the standard range in sea-ice modelling (see for example \cite{heorton,lepparanta}). 

\end{enumerate}
\end{remark}

\smallskip

In order to prove Theorem \ref{regularisedevplimit}, we will consider an intermediate system given by
\begin{subequations}\label{sys:simplifiedmodel}
\begin{gather}
\partial_t u  = \nabla \cdot \sigma + \mathcal{T}_a + \mathcal{T}_w + \Omega u^\perp - g \nabla H_0, \label{simplifiedmodel1} \\
\dfrac{1}{\mathcal E} \partial_t \bigl( \sigma - \alpha^2 \Delta \sigma \bigr) + \dfrac{4 {\mathcal D_\epsilon}}{P} (\sigma - \frac{1}{2} \Tr \sigma \mathbb I_2) + \dfrac{{\mathcal D_\epsilon}}{2P} \Tr \sigma \mathbb I_2 + \dfrac{{\mathcal D_\epsilon}}{2} \mathbb I_2  = D(u),
\label{simplifiedmodel2} \\
u \lvert_{t = 0} = u_0, \quad \sigma \lvert_{t = 0} = \sigma_0. \label{simplifiedmodel3}
\end{gather}
\end{subequations}
In this model, the strain rate $\mathcal{D}$ in system \eqref{sys:limitEVP} has been regularised by
\begin{equation} \label{viscosityregularisation}
\mathcal{D}_\epsilon \coloneqq \sqrt{\lvert D(u) \rvert^2 + \epsilon^2 },
\end{equation}
for some $\epsilon > 0$, where we recall that $D(u)$ denotes the symmetric part of the gradient, see equation \eqref{def:symty-gradient}. 

Then, by establishing uniform-in-$\epsilon $ estimates of the solutions to system \eqref{sys:simplifiedmodel} globally in time, we will be able to send $ \epsilon \rightarrow 0 $ and obtain the (unique) solution to system \eqref{sys:limitEVP}. As a byproduct of this proof, we also obtain the global well-posedness of the intermediate system \eqref{simplifiedmodel1}-\eqref{simplifiedmodel3} as stated below:
\begin{corollary} \label{cor:intermediatesys}
Under the same conditions on $u_0, \sigma_0, U_a, U_w, H_0,$ and $\theta$ as in Theorem \ref{regularisedevplimit}, for any $T > 0$ the intermediate system \eqref{sys:simplifiedmodel} is globally well-posed. The unique global solution $(u,\sigma)$ has the same regularity as in Theorem \ref{regularisedevplimit}.
\end{corollary}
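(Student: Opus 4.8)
The plan is to treat the intermediate system \eqref{sys:simplifiedmodel} as a genuinely nondegenerate problem: for fixed $\epsilon > 0$ the regularised strain rate $\mathcal{D}_\epsilon = \sqrt{|D(u)|^2 + \epsilon^2}$ satisfies $\mathcal{D}_\epsilon \geq \epsilon > 0$ and is a smooth function of $D(u)$ with all derivatives bounded in terms of $\epsilon$, so none of the singular or degenerate behaviour of the original rheology is present. Combined with the two-derivative smoothing of the Voigt operator $I - \alpha^2\Delta$, this makes the system amenable to a standard local-in-time well-posedness argument, which I would then extend globally via a priori estimates and close with a direct uniqueness estimate; the symmetry of $\sigma$ is inherited from $\sigma_0$ by Lemma \ref{invariancelemma}.

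First I would establish local existence. Inverting the elliptic operator $I - \alpha^2\Delta$ in \eqref{simplifiedmodel2} rewrites the stress equation as
\begin{equation*}
\partial_t \sigma = \mathcal{E}\,(I - \alpha^2\Delta)^{-1}\Big[ D(u) - \tfrac{4\mathcal{D}_\epsilon}{P}\big(\sigma - \tfrac{1}{2}\Tr\sigma\,\mathbb{I}_2\big) - \tfrac{\mathcal{D}_\epsilon}{2P}\Tr\sigma\,\mathbb{I}_2 - \tfrac{\mathcal{D}_\epsilon}{2}\,\mathbb{I}_2\Big],
\end{equation*}
while \eqref{simplifiedmodel1} is kept as is. Since $(I-\alpha^2\Delta)^{-1}$ gains two derivatives and $\mathcal{D}_\epsilon$ is, for fixed $\epsilon$, a smooth Nemytskii map with bounded derivatives, the right-hand side defines a locally Lipschitz vector field on $H^2(\mathbb{T}^2) \times H^3(\mathbb{T}^2)$: in the momentum equation the loss of one derivative in $\nabla\cdot\sigma$ is compensated by $\sigma \in H^3$, the ocean stress $\mathcal{T}_w$ is a smooth quadratic map of $u$ sending $H^2$ into $H^2$, and the remaining forcing is controlled by the assumed $C([0,T];H^4)$ regularity of $U_a, U_w, H_0$. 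A Banach fixed-point (Picard) argument, or equivalently a Galerkin scheme with passage to the limit, then yields a unique local strong solution on some $[0,T^*]$ with $u \in C([0,T^*]; H^2)$ and $\sigma \in C([0,T^*]; H^3)$.

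Next I would derive the a priori estimates that upgrade this local solution to a global one. These are carried out in the hierarchy $L^2$, $\dot{H}^1$, $\dot{H}^2$ for $u$ together with the matching norms for $\sigma$, exploiting two structural features: the skew term $\Omega u^\perp$ is energy-neutral and the ocean-drag contribution has a favourable sign once $\theta \in [0,\tfrac{\pi}{4}]$; and the term $-\alpha^2\Delta\partial_t\sigma$ furnishes, at each differentiation level $k$, control of $\tfrac{d}{dt}\big(\|\nabla^k\sigma\|_{L^2}^2 + \alpha^2\|\nabla^{k+1}\sigma\|_{L^2}^2\big)$, which is exactly what buys the extra derivative for $\sigma$. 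The coupling $\nabla\cdot\sigma$ in \eqref{simplifiedmodel1} and $D(u)$ in \eqref{simplifiedmodel2} must be paired so that the top-order exchange terms cancel, while the products involving $\mathcal{D}_\epsilon\sigma$ are handled by Sobolev embedding and the algebra property of $H^2(\mathbb{T}^2)$. Closing the $\dot{H}^2$ estimate requires a logarithmic Grönwall inequality, producing the superexponential bound recorded in Theorem \ref{regularisedevplimit}; for the corollary only finiteness of these bounds at fixed $\epsilon$ is needed, and a standard continuation argument then extends the solution to an arbitrary interval $[0,T]$.

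Finally, uniqueness follows from an energy estimate on the difference of two solutions with identical data: subtracting the equations, testing against the difference of velocities and the Voigt-weighted difference of stresses, and using that $\mathcal{D}_\epsilon$ is globally Lipschitz in $D(u)$ together with the smoothing of $(I-\alpha^2\Delta)^{-1}$ yields a differential inequality to which Grönwall applies. I expect the main obstacle to be closing the top-order $\dot{H}^2$ estimate: one must track precisely how the two-derivative smoothing of the Voigt term compensates the derivative losses in the $u$–$\sigma$ coupling and in the nonlinear strain-rate products, and arrange the cancellations so that the logarithmic Grönwall argument closes.
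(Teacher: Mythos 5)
Your overall strategy coincides with the paper's: an energy hierarchy at the $L^2$, $\dot H^1$, $\dot H^2$ levels, exploiting the cancellation between $\nabla\cdot\sigma$ tested against $u$ and $D(u)$ tested against $\sigma$, the sign condition $\cos\theta-|\sin\theta|\geq 0$ for $\theta\in[0,\tfrac{\pi}{4}]$, a logarithmic Gr\"onwall inequality to close the higher-order estimates, and uniqueness via $L^2$ and $H^1$ estimates on the difference of two solutions. Two points deserve correction. First, your local-existence step as stated would fail: the ocean drag $u\mapsto |U_w-u|(U_w-u)$ is \emph{not} a smooth Nemytskii map, since $v\mapsto|v|v$ is only $C^{1,1}$ (its second derivative is homogeneous of degree zero, hence bounded but discontinuous at $v=0$), so the right-hand side is not locally Lipschitz from $H^2(\mathbb{T}^2)$ into $H^2(\mathbb{T}^2)$ near velocities for which $U_w-u$ vanishes somewhere, and a Banach fixed-point argument at top order does not close. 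This is precisely why the paper runs a Fourier--Galerkin scheme (where Picard is applied to a finite-dimensional ODE system, for which the $C^{1,1}$ regularity of $|v|v$ suffices) and recovers existence by uniform estimates plus Aubin--Lions compactness; your parenthetical ``or equivalently a Galerkin scheme'' is in fact the route you must take, or else you must run the contraction in a lower-regularity norm on a bounded ball of $H^2\times H^3$.

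Second, ``Sobolev embedding and the algebra property of $H^2$'' alone do not close the $\dot H^1$ and $\dot H^2$ estimates: bounding $\|\sigma\|_{L^\infty}$ by $\|\sigma\|_{H^2}$ in the term $\|\mathcal D_\epsilon\|_{L^2}\|\sigma\|_{L^\infty}\|\Delta\sigma\|_{L^2}$ produces a superquadratic Gr\"onwall term and only local-in-time control. The mechanism that saves the argument is the Br\'ezis--Gallou\"et--Wainger inequality (Lemma \ref{brezisgallouet}) combined with the structural fact that the Voigt term places $\alpha^2\|\nabla^{k+1}\sigma\|_{L^2}^2$ inside the level-$k$ energy, so that $\|\sigma\|_{H^1}$ is already bounded by the $L^2$-level estimate and only a factor $\sqrt{\log(e+\|\sigma\|_{H^2})}$ of the top norm survives; this is what makes the logarithmic Gr\"onwall lemma applicable and yields the single/double/triple-exponential bounds. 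You correctly flag the logarithmic Gr\"onwall as the crux, but without this interpolation step the estimate you describe does not reduce to the form that lemma requires. With these two repairs your proof is the paper's proof.
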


\begin{remark}\label{rk:epsilon}
\begin{enumerate}[label = (\roman{enumi})]
    \item The benefit of introducing $ \mathcal D_\epsilon $ in the intermediate system \eqref{sys:simplifiedmodel} is that it regularises the strain rate $ \mathcal D $ and allows one to take its derivatives directly. We emphasise that such a regularisation serves as a cutoff of the bulk and shear viscosities and has an important role in sea-ice modelling as it describes the yield curve of sea ice, as part of the viscous-plastic rheology introduced in \cite{hibler}. \\
    Moreover, it is a natural numerical regularisation for the strain rate, which was first introduced in \cite{kreyscher}, cf. \cite{mehlmann}. Note that in practice, $\epsilon$ should be smaller than $10^{-9} \;$s$^{-1}$ \cite{bouillon2009}. For these reasons the study of the intermediate system \eqref{sys:simplifiedmodel} has its value independent of its role as an approximate system for the Voigt-EVP model.
    \item The proof of Theorem \ref{regularisedevplimit} implies that the limit as $\epsilon \rightarrow 0$ of the intermediate system \eqref{sys:simplifiedmodel}, i.e. removing the bulk and shear viscosity cutoff, is stable in the sense that the solutions remain close to each other for all small $\epsilon$. Our results are therefore very much in agreement with the numerical simulations in \cite{bouillon2009}, where it was found that the dynamics of the EVP model is not very much affected by the value of $\epsilon$.
    \item In \cite{liu}, the approximation \eqref{viscosityregularisation} is also used for the Hibler model \eqref{eq:hibler-strain-rate} as a regularisation of the strain rate, and it is shown that for any fixed $ \epsilon > 0 $, the resulting system of equations 
    is locally well-posed. However, as demonstrated by the numerical evidence in \cite{bouillon2009}, the limit as $ \epsilon \rightarrow 0 $ for the Hibler model is very unstable. The analysis of the limit as $\epsilon \rightarrow 0$ remains open.

\item 
Instead of \eqref{viscosityregularisation}, our analysis works for a larger class of regularisations for the strain rate $ \mathcal D $. See Section \ref{conclusion} for more discussion.

\end{enumerate}
\end{remark}

\subsection{Overview of the literature}
\subsubsection*{The sea-ice modelling literature} Sea ice is a highly complex material and modelling its dynamics and rheology poses significant challenges. In fact, numerous sea-ice models have been developed over time \cite{golden}. The Arctic Ice Dynamics Joint Experiment suggested an elastic-plastic rheology for modelling sea ice \cite{coon}. The Hibler sea-ice model was introduced in \cite{hibler}, which uses a viscous-plastic rheology which has become one of the standard sea-ice rheologies. 

The Hibler model has been a very successful model in reproducing observed sea-ice drift \cite{losch}. However, its use in practical computations turns out to be very expensive \cite{bouillon2009,koldunov}, particularly so when using an explicit numerical method \cite{ip}. Instead, implicit methods are employed in order to compute solutions of the Hibler model, for example the line relaxation method \cite{zhang} as well as the Jacobian-free Newton Krylov solver \cite{lemieux,losch2014,seinen} to mention some.

The elastic-viscous-plastic (EVP) sea-ice model was introduced in \cite{hunke} in order to improve the computational efficiency of sea-ice modelling. Specifically, the EVP model is a modification of the Hibler model, as an elastic (relaxation) term is introduced into the constitutive relation. The advantage of the EVP formulation over the Hibler model is that it is expected to be more amenable to the use of explicit numerical schemes for computational modelling, and hence facilitates the use of parallel computing \cite{hunkelinearization}, see also \cite{hunke2002}. As a result, numerically solving the EVP model significantly reduces the computational cost compared to the use of implicit schemes for the Hibler model
\cite{bouillon2009}. 

In addition, the EVP model \eqref{eq:hibler-momentum} and \eqref{eq:evp-strain-rate} has the advantage that its computational performance is not affected by the value of the strain rate regularisation parameter $\epsilon$, while the Hibler model requires a small value of $\epsilon$ in the viscosity regularisation to accurately model plastic behaviour as shown in \cite{bouillon2009}, cf. Remark \ref{rk:epsilon}.
Because of the aforementioned reasons the EVP model has become one of the standard and most commonly used sea-ice models in contemporary climate modelling \cite{bouillon2009,golden,koldunov}.

Over time, several modifications of the EVP model were introduced which have better numerical performance in simulations. For example, in \cite{hunkelinearization} the elastic parameter was redefined to depend on the strain rate. The reason for this modification was to prevent the computed stresses from lying outside the yield curve (cf. \cite{hibler}) during the (numerical) subcycling process. 

\smallskip

However, although the EVP model was originally introduced as a reformulation of the Hibler model which was more suitable for numerical purposes, it remains unclear whether it successfully approximates the Hibler model, i.e. by sending $ \mathcal E \rightarrow \infty $ in equation \eqref{eq:evp-strain-rate} to obtain \eqref{eq:hibler-strain-rate}. In particular, it was found in \cite{lemieux,losch,losch2010,kimmritz}, see also \cite{bouillon}, that the numerically computed solutions of the EVP and the Hibler models can differ substantially, even for very small subcycling time steps. Note that this does not necessarily imply that either sea-ice model is physically more accurate, but just that their rheologies are fundamentally different from a computational point of view \cite{lemieux}.

Nevertheless, within the subcycling time steps, the effect of the artificial elasticity in the rheology is `damped' over the larger timescale, although it was found in \cite{lemieux} that the damping effect is weaker in areas of rigid ice, i.e. the regions with high concentration of ice and small strain rates. Finally, we note that several other sea-ice models have been developed in the literature. For example, in order to describe sea-ice dynamics across a range of different length scales, a multiscale model was recently developed in \cite{deng} (see also \cite{chen}).

\subsubsection*{The mathematical analysis literature}
The first results in the rigorous analysis of sea-ice dynamics only appeared very recently. The local well-posedness of the Hibler model was proved independently in \cite{liu,brandt}. Both of these papers slightly modified the original viscosity cutoff from \cite{hibler} by introducing a regularisation of the form \eqref{viscosityregularisation}, which is required to avoid the singularity when $D(u) = 0$. 

To be more precise, in \cite{brandt}, additional artificial diffusion terms were considered in the thermodynamics, which make the system fully parabolic. These additional diffusion terms made it possible to prove global existence of strong solutions for small data in \cite{brandt}. Meanwhile, the study in \cite{liu} retained the original mixed hyperbolic-parabolic character of the Hibler model. Later on, the same result as in \cite{liu} was also proved in \cite{brandt2025} with different techniques. A different regularisation of the viscosities was studied in \cite{chatta}, namely the replacement of the Heaviside cutoff by a hyperbolic tangent, where the authors establish the linear well-posedness (see also \cite{chatta2025} for a study of the two-dimensional case). 

The interaction between sea ice and a rigid body for the fully parabolic regularisation of the Hibler model was then studied in \cite{binz2024}. See \cite{binz2022} for some results on the coupling with oceanic and atmospheric dynamics. The free-boundary problem of shallow (land) ice-sheets was studied in \cite{piersanti}.

In addition, in \cite{mehlmann}, under the assumption of the existence of smooth solutions, a formal $H^1$-estimate was derived for the Hibler model as well as for a modified version of the EVP model in the formulation introduced in \cite{hunkelinearization}. This formulation of the EVP model from \cite{hunkelinearization} has a very different structure to our setting. 
The gradient estimates in \cite{mehlmann} strongly rely on the cutoff of the bulk and shear viscosities in the constitutive relation, and also on the linearisation of the ocean drag term. In the revised EVP case, the additional assumption that $\Tr \sigma = 0$ is made in \cite{mehlmann}. The a priori estimates in \cite{mehlmann} do not seem sufficient to deduce the existence of solutions for the revised EVP model, but instead are used in \cite{mehlmann} to check the consistency of a numerical discretisation scheme for these sea-ice models.

We also note that the EVP model \eqref{eq:hibler-momentum} and \eqref{eq:evp-strain-rate} shares some structural similarities with several models for non-Newtonian flows, for example the Oldroyd-B model, see for example \cite{guillope,lions,chemin,kupferman,lin,constantin2012,elgindi,constantin2021} and references therein, and other viscoelastic flow models, which have been studied in \cite{bulicek,bathory,bulicek2024} and references therein. Finally, we note that our treatment of the viscosity regularisation \eqref{viscosityregularisation} is in some sense related to results for non-Newtonian fluid models with shear dependent viscosities, see for example \cite{berselli} and references therein.

As has been mentioned in point $ (\romannumeral 1) $ in Remark \ref{rm:thm}, in this work we have chosen to use the (inviscid) Voigt regularisation for the EVP model. To the best of our knowledge, the Voigt regularisation in the context of the Navier-Stokes equations has first been rigorously studied in \cite{oskolkov1977,oskolkov1997}. Much later, the existence of solutions for such Kelvin-Voigt-type systems with non-constant density has also been investigated in \cite{antontsev} and see references therein. Moreover, the Voigt regularisation has been investigated extensively in the context of turbulence modelling, for example in \cite{larios2010,larios2010boussinesq,larios2014}. More recently, it has also been used as a regularisation mechanism to study steady states \cite{constantin2023}, see also \cite{ignatova}. We also remark that employing the Voigt regularisation typically leads to equations of pseudo-parabolic type, which have been studied in \cite{showalter1970a,showalter1970b,showalter1975} (and see references therein).

\smallskip

The rest of this paper is organised as follows. In Section \ref{preliminariessection}, we will recall some inequalities that we will use throughout this paper. In Section \ref{sec:intermediatesys}, we will establish the global well-posedness of the intermediate system \eqref{sys:simplifiedmodel}. Following this, in Section \ref{regularisedevpsection}, we finish the proof of Theorem \ref{regularisedevplimit}, namely the global well-posedness of the Voigt-EVP model \eqref{sys:limitEVP}. We will provide some concluding remarks in Section \ref{conclusion}.

\section{Preliminaries} \label{preliminariessection}

Throughout this paper, we will use the notation $ C $ to denote a generic constant, which might differ from line to line. For any quantities $ A, B \geq 0$, we shorten the notation $ A \leq C B $ for some positive but finite constant $C$ as
\begin{equation}
    A \lesssim B.
\end{equation}

We recall a few results that will be used in this paper. We first recall the Brézis-Gallouët–Wainger inequality, which was originally proved in \cite{brezis1979,brezis1980}:
\begin{lemma}[Brézis-Gallouët–Wainger inequality] \label{brezisgallouet}
Let $v \in H^2 (\mathbb{T}^2)$ (where $v$ is dimensionless, i.e. it does not have physical units), then it satisfies the following inequality
\begin{equation}\label{ineq:bgw}
\lVert v \rVert_{L^\infty} \leq C \left( \lVert v \rVert_{H^1} \sqrt{\log \bigg(1 + C_L \lVert v \rVert_{H^2} \bigg)} + 1 \right),
\end{equation}
where we note that the constant $C_L$ depends on the length scale of the torus (which we take to be unit length in our case), while $C$ is a dimensionless constant.
\end{lemma}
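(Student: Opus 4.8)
The plan is to work directly with the Fourier series of $v$ on the unit torus and to split the frequencies into a low and a high range at a threshold $N \geq 1$ that will be optimised at the very end. Writing $v(x) = \sum_{k \in \mathbb{Z}^2} \hat v(k) e^{2\pi i k \cdot x}$, the starting point is the crude bound $\lVert v \rVert_{L^\infty} \leq \sum_{k \in \mathbb{Z}^2} |\hat v(k)|$, so everything reduces to estimating the absolute sum of the Fourier coefficients. The only conceptual idea is that this full sum diverges logarithmically when it is controlled by the $H^1$ norm alone (which is exactly the reason $H^1(\mathbb{T}^2)$ fails to embed into $L^\infty$), and that one recovers a finite bound by paying a single logarithm while borrowing a little extra regularity from the $H^2$ norm on the high frequencies.

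For the low modes $|k| \leq N$ I would insert the weight $(1 + |k|^2)^{1/2}$ and apply the Cauchy--Schwarz inequality,
\begin{equation*}
\sum_{|k| \leq N} |\hat v(k)| \leq \Bigl( \sum_{|k| \leq N} \frac{1}{1 + |k|^2} \Bigr)^{1/2} \Bigl( \sum_{|k| \leq N} (1 + |k|^2) |\hat v(k)|^2 \Bigr)^{1/2} \leq \Bigl( \sum_{|k| \leq N} \frac{1}{1+|k|^2} \Bigr)^{1/2} \lVert v \rVert_{H^1},
\end{equation*}
and then compare the remaining lattice sum with the integral $\int_{1}^{N} \frac{r}{1+r^2}\,dr$ to obtain a bound of the form $C \log(1 + C_L N)$, which supplies the logarithmic factor. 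For the high modes $|k| > N$ I would instead use the weight $|k|^2$ and Cauchy--Schwarz to get $\sum_{|k| > N} |\hat v(k)| \leq \bigl( \sum_{|k|>N} |k|^{-4} \bigr)^{1/2} \lVert v \rVert_{\dot H^2} \leq C N^{-1} \lVert v \rVert_{H^2}$, the tail being controlled by $\int_N^\infty r^{-3}\,dr \sim N^{-2}$.

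The decisive step is the choice of the cutoff $N$: taking $N \approx 1 + \lVert v \rVert_{H^2}$ (which is legitimate since this keeps $N \geq 1$) makes the high-frequency contribution $N^{-1}\lVert v \rVert_{H^2}$ bounded by an absolute constant, while turning the low-frequency logarithm into $\sqrt{\log(1 + C_L \lVert v \rVert_{H^2})}$ multiplying $\lVert v \rVert_{H^1}$. Collecting the two pieces then yields exactly \eqref{ineq:bgw}, with the additive $+1$ absorbing both the constant coming from the high-frequency tail and the regime where $\lVert v \rVert_{H^2}$ is small (where the direct Sobolev embedding $\lVert v \rVert_{L^\infty} \lesssim \lVert v \rVert_{H^2}$ already suffices).

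I expect the main obstacle to be bookkeeping rather than conceptual. One must track the dimensionless constants carefully --- in particular the length-scale-dependent constant $C_L$ inside the logarithm, which enters because the Laplacian eigenvalues (equivalently the frequencies $k$) carry units of inverse length on a torus of side $L$ and must be non-dimensionalised against $\lVert v \rVert_{H^2}$ before one takes the logarithm. One also has to check that the balancing of $N$ is uniform, so that the final constants $C$ and $C_L$ depend only on the geometry of the torus and not on $v$, and to confirm that the comparison of the two lattice sums with their integral counterparts holds for all $N \geq 1$ (not merely asymptotically), which is where the $+1$ in the statement earns its place.
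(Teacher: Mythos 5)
The paper does not prove this lemma at all: it is recalled as a classical result with a citation to Br\'ezis--Gallou\"et and Br\'ezis--Wainger, so there is no in-paper argument to compare against. Your proposal is the standard and correct self-contained proof by frequency splitting: bounding $\lVert v\rVert_{L^\infty}$ by $\sum_k \lvert\hat v(k)\rvert$, using Cauchy--Schwarz against the $H^1$ weight on $\lvert k\rvert\le N$ (which costs $\sqrt{\log(1+N)}$ since $\sum_{\lvert k\rvert\le N}(1+\lvert k\rvert^2)^{-1}\sim\log N$ in two dimensions), using Cauchy--Schwarz against the $H^2$ weight on the tail (which costs $N^{-1}\lVert v\rVert_{H^2}$), and then balancing with $N\approx 1+\lVert v\rVert_{H^2}$. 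The only point worth spelling out in a clean write-up is the final absorption step: your choice of $N$ produces $\sqrt{\log\bigl(1+C_L(1+\lVert v\rVert_{H^2})\bigr)}$ rather than $\sqrt{\log(1+C_L\lVert v\rVert_{H^2})}$, and the discrepancy is a constant multiple of $\lVert v\rVert_{H^1}$; this is absorbed into the additive $+1$ when $\lVert v\rVert_{H^1}\le 1$, and when $\lVert v\rVert_{H^1}\ge 1$ one uses $\lVert v\rVert_{H^2}\ge\lVert v\rVert_{H^1}\ge 1$ so that the logarithm in the target is bounded below by the positive constant $\log(1+C_L)$. You flag this as bookkeeping, which is accurate; there is no gap.
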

We will also use the Ladyzhenskaya inequality in two space dimensions. Let $f \in H^1 (\mathbb{T}^2)$, then we have
\begin{equation}\label{ineq:2d-ldzsky}
\lVert f \rVert_{L^4} \lesssim \lVert f \rVert_{L^2}^{1/2} \lVert \nabla f \rVert_{L^2}^{1/2} + \lVert f \rVert_{L^2}.
\end{equation}
In addition, we recall the logarithmic Gr\"onwall inequality, which in this form can be found in \cite[~Lemma 2.5]{cao2020}:
\begin{lemma}[Logarithmic Gr\"onwall inequality] \label{logarithmicgronwall}
Let $T > 0$, and assume $A : [0,T] \rightarrow [0,\infty)$ is a nonnegative absolutely continuous function. Further assume there exist nonnegative $L^1 (0,T)$ functions $f, g,$ and $h$ such that
\begin{equation}
\frac{\rm d}{\dt} A (t) \leq \big[ g(t) + h(t) \log (A (t) + e) \big] A(t) + f(t).
\end{equation}
Then we have the following estimate
\begin{equation}
A (t) \leq (2 F(t) + 1) e^{F(t)},
\end{equation}
where we have introduced the following function
\begin{equation}
F(t) \coloneqq e^{\int_0^t h(s) \ {\rm d} s} \bigg( \log (A(0) + e) + \int_0^t \big[ g(s) + f(s) \big] \ {\rm d} s + t \bigg).
\end{equation}
\end{lemma}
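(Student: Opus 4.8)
The plan is to recognise that, in spite of its name, this logarithmic Grönwall inequality reduces to an ordinary \emph{linear} differential inequality after the substitution $B(t) \coloneqq \log(A(t)+e)$. Since $A$ is nonnegative and absolutely continuous and the map $x \mapsto \log(x+e)$ is Lipschitz on $[0,\infty)$ (its derivative $\frac{1}{x+e}$ is bounded by $\frac{1}{e}$ there), the composition $B$ is again absolutely continuous. Hence $B'(t) = \frac{A'(t)}{A(t)+e}$ holds for almost every $t \in [0,T]$ and $B$ can be recovered from its derivative by the fundamental theorem of calculus; note also that $B(t) \ge \log e = 1 > 0$.

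First I would divide the hypothesised inequality through by $A(t)+e$ and estimate the resulting right-hand side by means of the two elementary bounds $\frac{A}{A+e} \le 1$ and $\frac{1}{A+e} \le 1$, both valid because $A \ge 0$ and $e > 1$, together with the nonnegativity of $g$, $h$ and $B$. This yields
\begin{equation}
B'(t) = \frac{A'(t)}{A(t)+e} \le \frac{\big[g(t)+h(t)B(t)\big]A(t)}{A(t)+e} + \frac{f(t)}{A(t)+e} \le h(t) B(t) + \big( g(t)+f(t) \big) \quad \text{for a.e. } t ,
\end{equation}
which is precisely a linear differential inequality for $B$ with $L^1(0,T)$ coefficients.

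Next I would integrate this inequality by the integrating-factor method. Multiplying by $\mu(t) \coloneqq e^{-\int_0^t h(s)\,ds}$ recasts the left-hand side as $\frac{\rm d}{\dt}\big(\mu B\big)$, and integrating from $0$ to $t$, using $\mu(s) \le 1$ (since $h \ge 0$) to discard the integrating factor on the right, gives
\begin{equation}
B(t) \le e^{\int_0^t h(s)\,ds}\bigg( B(0) + \int_0^t \big( g(s)+f(s) \big)\,ds \bigg) \le F(t) ,
\end{equation}
where $B(0) = \log(A(0)+e)$ and the final inequality simply drops the nonnegative term $e^{\int_0^t h}\,t$ present in the definition of $F$. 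Inverting the substitution then gives $A(t) = e^{B(t)} - e \le e^{F(t)} \le (2F(t)+1)\,e^{F(t)}$, where the last step uses $F(t) > 0$ so that $2F(t)+1 \ge 1$; this is exactly the claimed estimate.

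In fact this argument delivers the sharper bound $A(t) \le e^{F(t)}$, so that both the prefactor $2F(t)+1$ and the extra summand $+t$ inside $F$ are comfortable slack rather than necessities. Consequently I do not expect a genuine analytic obstacle: the points that require care are only the measure-theoretic ones — verifying that $B$ is absolutely continuous so that the chain rule and the fundamental theorem of calculus are licit almost everywhere, and that $g, h, f \in L^1(0,T)$ keeps every integral finite. The one essential observation, and the step easiest to overlook, is that passing to $\log(A+e)$ linearises the nonlinear logarithmic term, after which the estimate is entirely standard.
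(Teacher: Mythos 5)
Your proof is correct. Note that the paper itself does not prove this lemma at all --- it is quoted verbatim from the reference \cite[Lemma 2.5]{cao2020} --- so there is no in-paper argument to compare against; your proposal supplies a complete, self-contained proof. The substitution $B(t)\coloneqq\log(A(t)+e)$ is exactly the right linearising device, and the measure-theoretic care you take is the part that genuinely needs checking: $B$ is absolutely continuous as the composition of the globally Lipschitz map $x\mapsto\log(x+e)$ with the absolutely continuous $A$, the chain rule $B'=A'/(A+e)$ holds a.e., and the product $\mu B$ with $\mu(t)=e^{-\int_0^t h}$ is again absolutely continuous, so the integrating-factor step and the fundamental theorem of calculus are legitimate. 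The elementary bounds $\tfrac{A}{A+e}\le 1$, $\tfrac{f}{A+e}\le f$, and $\mu\le 1$ (using $f,g,h\ge 0$ and $B\ge 1$) are all valid, and your conclusion $B(t)\le F(t)$ gives the sharper estimate $A(t)\le e^{F(t)}-e$, which trivially implies the stated bound since $F(t)>0$. The extra summand $t$ inside $F$ and the prefactor $2F+1$ in the lemma are indeed slack --- they are artefacts of the (cruder) argument in the cited source, carried along so that the statement matches the reference --- and your observation that they are not needed is accurate rather than a discrepancy.
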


In order to prove the uniqueness of strong solutions in the subsequent sections, we will need to prove a bound on the difference of the strain rates.
\begin{lemma} \label{strainratelemma}
Let $v_1, v_2 \in H^2 (\mathbb{T}^2)$ and recall the notation $D(v)$ defined in equation \eqref{def:symty-gradient}, then we have
\begin{equation}\label{ineq:difference}
\left\lVert \sqrt{\lvert D(v_1) \rvert^2 + \epsilon^2} - \sqrt{\lvert D(v_2) \rvert^2 + \epsilon^2} \right\rVert_{L^2} \leq \left\lVert \nabla v_1 - \nabla v_2 \right\rVert_{L^2},
\end{equation}
for all $ \epsilon \geq 0 $. 
\end{lemma}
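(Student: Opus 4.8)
The plan is to reduce \eqref{ineq:difference} to a pointwise inequality holding at almost every $x \in \mathbb{T}^2$ and then integrate. Since the target $L^2$ bound has constant exactly $1$ (rather than a generic $C$), the only thing to be careful about is that each intermediate estimate is sharp; there is no serious analytic obstacle here, and the hypotheses $v_1, v_2 \in H^2$ are in fact stronger than needed ($H^1$ suffices for the integrals to make sense) and are inherited from the uniqueness context in which the lemma will be applied.

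First I would isolate the scalar fact that the map $s \mapsto \sqrt{s^2 + \epsilon^2}$ on $[0,\infty)$ is $1$-Lipschitz. Concretely, for nonnegative scalars $a, b$ one has the identity
\[
\left| \sqrt{a^2 + \epsilon^2} - \sqrt{b^2 + \epsilon^2} \right| = \frac{|a - b|\,(a + b)}{\sqrt{a^2 + \epsilon^2} + \sqrt{b^2 + \epsilon^2}} \le |a - b|,
\]
where the inequality follows from $a \le \sqrt{a^2 + \epsilon^2}$ and $b \le \sqrt{b^2 + \epsilon^2}$, so that the fraction multiplying $|a-b|$ is at most $1$. (Equivalently, the derivative $s / \sqrt{s^2 + \epsilon^2}$ has modulus at most $1$.) This is the only point where the structure of the regularised strain rate enters, and it is valid for every $\epsilon \ge 0$, as required by the statement.

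Next I would apply this with $a = |D(v_1)(x)|$ and $b = |D(v_2)(x)|$ and chain it with two further pointwise bounds. The reverse triangle inequality for the Frobenius norm, together with the linearity of $D$, gives
\[
\big| |D(v_1)(x)| - |D(v_2)(x)| \big| \le |D(v_1)(x) - D(v_2)(x)| = |D(v_1 - v_2)(x)|.
\]
Then, since the Frobenius norm is invariant under transposition, the definition \eqref{def:symty-gradient} yields $|D(w)| = \tfrac{1}{2} |\nabla w + (\nabla w)^\top| \le |\nabla w|$ by the triangle inequality; applying this with $w = v_1 - v_2$ produces the pointwise estimate
\[
\left| \sqrt{|D(v_1)|^2 + \epsilon^2} - \sqrt{|D(v_2)|^2 + \epsilon^2} \right| \le |\nabla v_1 - \nabla v_2| \quad \text{a.e. on } \mathbb{T}^2.
\]

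Finally I would square this inequality, integrate over $\mathbb{T}^2$, and take square roots to obtain \eqref{ineq:difference}. Because every step above is either an equality or a sharp inequality, no constant is lost, and the factor $1$ in the conclusion is preserved, which is precisely what is needed for the subsequent uniqueness estimates.
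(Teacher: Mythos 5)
Your proof is correct and follows essentially the same route as the paper: both reduce the claim to the pointwise bound $\bigl\lvert \sqrt{\lvert D(v_1)\rvert^2+\epsilon^2}-\sqrt{\lvert D(v_2)\rvert^2+\epsilon^2}\bigr\rvert \le \lvert D(v_1)-D(v_2)\rvert \le \lvert \nabla v_1-\nabla v_2\rvert$ and integrate, the only cosmetic difference being that the paper obtains the first inequality in one step from the difference-of-squares identity with the Frobenius inner product, whereas you factor it through the $1$-Lipschitz scalar map $s\mapsto\sqrt{s^2+\epsilon^2}$ and the reverse triangle inequality.
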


\begin{proof}
    One can directly compute
    \begin{gather*}
        \vert \sqrt{\lvert D(v_1) \rvert^2 + \epsilon^2} - \sqrt{\lvert D(v_2) \rvert^2 + \epsilon^2} \vert = \left\vert \dfrac{\lbrack D(v_1) + D(v_2) \rbrack: \lbrack D(v_1) - D(v_2) \rbrack}{\sqrt{\lvert D(v_1) \rvert^2 + \epsilon^2} + \sqrt{\lvert D(v_2) \rvert^2 + \epsilon^2}} \right\vert \\
        \leq \vert D(v_1) - D(v_2) \vert.
    \end{gather*}
    This implies inequality \eqref{ineq:difference} for smooth $ v_1 $ and $ v_2 $. For $ v_1, v_2 \in H^2(\mathbb T^2) $, one can proceed by choosing  smooth approximating sequences of $ H^2(\mathbb T^2) $ functions and conclude the proof. 
\end{proof}

We show in the following lemma that $ \sigma $ remains symmetric under the evolution of the Voigt-EVP model if it is initially symmetric:
\begin{lemma}[Invariance of the symmetry] \label{invariancelemma}
Consider a solution $ (u,\sigma) $ to either system \eqref{sys:limitEVP} or \eqref{sys:simplifiedmodel}, which has the regularity described in Theorem \ref{regularisedevplimit}, with $ \sigma_0 \in H^3 (\mathbb{T}^2; \mathbb{R}^{2 \times 2})$ being symmetric. That is, $ u \in C([0,T];H^2(\mathbb T^2)) $ and $ \sigma \in C([0,T];H^3(\mathbb T^2)) $
for arbitrary $ T > 0 $. 
Then $ \sigma(x,t) = \sigma(x,t)^\top $ for all $ 0 \leq t \leq T $ and all $x \in \mathbb{T}^2$. 
\end{lemma}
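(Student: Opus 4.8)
The plan is to exploit the fact that every term in the constitutive equation \eqref{voigtregularisation2} (equivalently \eqref{limitEVP2} or \eqref{simplifiedmodel2}), apart from those that are linear in $\sigma$, is manifestly symmetric: $D(u)$ is symmetric by its definition \eqref{def:symty-gradient}, the identity $\mathbb{I}_2$ is symmetric, and $\Tr \sigma$ is a scalar. Writing the equation for $\sigma$ and subtracting its transpose (using $\Tr \sigma^\top = \Tr \sigma$, that $\Delta$ and $\partial_t$ commute with transposition, and that the two terms proportional to $\mathbb{I}_2$ and to $D(u)$ cancel) yields a \emph{closed, linear} evolution equation for the antisymmetric part $\tau \coloneqq \sigma - \sigma^\top$, namely
\begin{equation*}
\frac{1}{\mathcal E}\,\partial_t \bigl( \tau - \alpha^2 \Delta \tau \bigr) + \frac{4\mathcal{D}}{P}\,\tau = 0,
\end{equation*}
with $\tau|_{t=0} = \sigma_0 - \sigma_0^\top = 0$ by the assumed symmetry of $\sigma_0$. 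Here $\mathcal{D}$ (or $\mathcal{D}_\epsilon$ for the intermediate system) enters only as a given nonnegative coefficient, since $(u,\sigma)$ is a fixed solution.

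Next I would run a pseudo-parabolic energy estimate. Taking the Frobenius inner product of the displayed equation with $\tau$ and integrating over $\mathbb{T}^2$, the spatial integration by parts on the torus (with no boundary terms) converts $-\int_{\mathbb{T}^2}\Delta \tau : \tau$ into $\int_{\mathbb{T}^2}|\nabla \tau|^2$, so that
\begin{equation*}
\frac{1}{2\mathcal E}\,\frac{\rm d}{\dt}\left( \lVert \tau \rVert_{L^2}^2 + \alpha^2 \lVert \nabla \tau \rVert_{L^2}^2 \right) = -\frac{4}{P}\intO{\mathcal{D}\,|\tau|^2} \leq 0,
\end{equation*}
using $\mathcal{D} \geq 0$ and $P > 0$. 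Hence the nonnegative quantity $\lVert \tau \rVert_{L^2}^2 + \alpha^2 \lVert \nabla \tau \rVert_{L^2}^2$ is nonincreasing in time and vanishes initially, forcing $\tau \equiv 0$ on $[0,T]$; equivalently $\sigma = \sigma^\top$ for all $t \in [0,T]$. (Since $\mathbb{R}^{2\times 2}$ antisymmetric matrices are parametrised by a single scalar, one may equivalently phrase the whole argument scalar-wise, but the matrix formulation makes the cancellations transparent.)

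The one point requiring care is the justification of the energy identity at the stated regularity rather than any genuine analytic difficulty. Given $u \in C([0,T];H^2(\mathbb{T}^2))$ and $\sigma \in C([0,T];H^3(\mathbb{T}^2))$, the coefficient $\mathcal{D} = \lvert D(u)\rvert$ is merely Lipschitz (not differentiable where $D(u)=0$), but the argument never differentiates $\mathcal{D}$: it is used only as a bounded-in-$L^2$, sign-definite multiplier, and for the intermediate system $\mathcal{D}_\epsilon$ is in any case smooth. To legitimise the time differentiation and the integration by parts, I would note that inverting the elliptic operator $(\mathbb{I} - \alpha^2\Delta)$ in the antisymmetric equation shows $\partial_t \tau$ inherits enough spatial regularity (the right-hand side $-\tfrac{4\mathcal E}{P}\mathcal{D}\,\tau$ lies in $H^1(\mathbb{T}^2)$ because $\tau \in H^3 \hookrightarrow W^{1,\infty}$ and $\mathcal{D}\in H^1$), so that $\tau \in C([0,T];H^3)$ with $\partial_t \tau \in H^1$ suffices to make the computation rigorous; alternatively one appeals directly to uniqueness for the linear pseudo-parabolic problem with zero data. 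I therefore expect the main (mild) obstacle to be only this bookkeeping of regularity, while the structural cancellation and the sign of the $\mathcal{D}$-term do all the real work.
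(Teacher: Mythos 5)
Your proof is correct and follows essentially the same route as the paper: the paper likewise subtracts the transposed constitutive equation to obtain the closed linear pseudo-parabolic equation $\frac{1}{\mathcal E}\partial_t(I-\alpha^2\Delta)W(\sigma)+\frac{4H}{P}W(\sigma)=0$ for the antisymmetric part (with $H=\mathcal D$ or $\mathcal D_\epsilon$), and then concludes by the same $L^2$ energy estimate showing $\lVert W(\sigma)\rVert_{L^2}^2+\alpha^2\lVert\nabla W(\sigma)\rVert_{L^2}^2$ stays at zero. Your additional remarks on justifying the energy identity at the stated regularity correspond to the paper's observation that the antisymmetric equation holds in $C([0,T];H^1(\mathbb T^2))$.
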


\begin{proof}
    Denote the antisymmetric part of $ \sigma $ by
    \begin{equation}
        W(\sigma) \coloneqq \frac{1}{2}(\sigma - \sigma^\top). 
    \end{equation}
    Then, from equation \eqref{limitEVP2} or equation \eqref{simplifiedmodel2}, one has 
    \begin{equation}
        \frac{1}{\mathcal{E}} \partial_t (I - \alpha^2 \Delta) W(\sigma) + \frac{4 H}{P} W( \sigma) = 0,
    \end{equation}
    for $ H = \mathcal D $ or $ H = \mathcal D_\epsilon $. Note that this equation holds in $C([0,T]; H^1 (\mathbb{T}^2))$. Subsequently, the standard $ L^2 $-estimate for $W(\sigma)$ yields 
    \begin{equation}
    \sup_{t \in [0,T]} \big[ \lVert W(\sigma) (t) \rVert_{L^2}^2 + \alpha^2 \lVert \nabla W(\sigma) (t) \rVert_{L^2}^2 \big] \leq \lVert W(\sigma_0) \rVert_{L^2}^2 + \alpha^2 \lVert \nabla W(\sigma_0) \rVert_{L^2}^2 = 0. 
    \end{equation}
    This finishes the proof. 
\end{proof}

\section{Global solutions to the intermediate system \eqref{sys:simplifiedmodel}}\label{sec:intermediatesys}

Our strategy of establishing the global solvability of system \eqref{sys:simplifiedmodel} is the following: 
First, we will write down the Galerkin approximation scheme. At each level of the scheme, by directly applying the Picard theory we obtain a local-in-time solution  of the Galerkin system. 
Second, we establish better regularity for the approximating solutions (as well as their global existence in time), and pass to the limit in the Galerkin scheme using the Aubin-Lions compactness theorem. Finally, we establish the uniqueness of the strong solution. 

\subsection{The Galerkin approximation scheme}
We look for a solution of the type
\begin{equation*}
u_N(x,t) = \sum_{k \in \mathbb{Z}^2, \lvert k \rvert \leq N} \widehat{u}_k(t) e^{2 \pi i k \cdot x}, \quad \sigma_N(x,t) = \sum_{k \in \mathbb{Z}^2, \lvert k \rvert \leq N} \widehat{\sigma}_k(t) e^{2 \pi i k \cdot x},
\end{equation*}
which satisfies the following Galerkin system (together with the usual condition that $u_N$ and $\sigma_N$ are real-valued):
\begin{subequations} \label{sys:galerkin}
\begin{gather}
\partial_t u_N  = \nabla \cdot \sigma_N + \mathbf{P}_N \mathcal{T}_a + \mathbf{P}_N \mathcal{T}_w^N + \Omega u^\perp_N - g \mathbf{P}_N \nabla H_0, \label{galerkin1} \\
\begin{gathered} 
\frac{1}{\mathcal{E}} \partial_t \big( \sigma_N - \alpha^2 \Delta \sigma_N \big) + \mathbf{Q}_N \bigg[\frac{4 \mathcal{D}_\epsilon^N}{P} (\sigma_N - \frac{1}{2} \Tr \sigma_N \mathbb I_2) \bigg] + \mathbf{Q}_N \bigg[ \frac{\mathcal{D}_\epsilon^N}{2 P} \Tr \sigma_N \mathbb I_2 \bigg] \\ 
+ \frac{\mathbf{Q}_N \mathcal{D}_\epsilon^N}{2} \mathbb I_2 = D(u_N), 
\end{gathered}\label{galerkin2} \\
u \lvert_{t = 0} = \mathbf{P}_N u_0, \quad \sigma \lvert_{t = 0} = \mathbf{Q}_N \sigma_0, \label{galerkin3}
\end{gather}
\end{subequations}
where $\mathbf{P}_N$ is the projection of two-dimensional vector fields onto the Fourier modes up to order $N$, including the zeroth mode, while $\mathbf{Q}_N$ is the projection of $2 \times 2$ (symmetric) matrices onto the Fourier modes up to order $N$.  
In system \eqref{sys:galerkin}, we have used the notation
\begin{align}
\label{def:galerkin-oceanstress}
\mathcal{T}_{w}^N &= c_w \rho_w \lvert U_w - u_N \rvert \bigg[ (U_w - u_N) \cos \theta + (U_w - u_N)^\perp \sin \theta \bigg] \\
\intertext{and}
\label{def:galerkin-strainrate}
\mathcal{D}_\epsilon^N &= \sqrt{\lvert D(u_N) \rvert^2 + \epsilon^2 }.
\end{align}
Notice that system \eqref{sys:galerkin} is a first-order system of ordinary differential equations for $ \lbrace \hat u_k, \hat\sigma_k \rbrace_{\vert k \vert \leq N} $, with a locally Lipschitz continuous vector field with respect to the unknowns. 
Thus, the Picard theory (see for example \cite{evans}) implies that, for each initial datum $(\mathbf{P}_N u_0, \mathbf{Q}_N \sigma_0)$, there exists a unique local-in-time solution. 

\subsection{Uniform-in-$ (N, \epsilon) $ estimates}
To establish the uniform-in-$ N $ estimates for the solutions of system \eqref{sys:galerkin}, let
\begin{equation}\label{def:sigma_p}
    \tau_N \coloneqq \sigma_N + \frac{P}{2} \mathbb I_2.
\end{equation}
Recall that the pressure $ P > 0 $ is constant. We then rewrite equation \eqref{galerkin2} as follows:
\begin{equation}
     \begin{gathered}
     \frac{1}{\mathcal{E}} \partial_t \bigg( \tau_N  - \alpha^2 \Delta \tau_N \bigg) + \mathbf{Q}_N \bigg[\frac{4 \mathcal{D}_\epsilon^N}{P} \bigg( \tau_N - \frac{1}{2} \Tr \tau_N  \mathbb I_2 \bigg) \bigg]  \\
     + \mathbf{Q}_N \bigg[ \frac{\mathcal{D}_\epsilon^N}{2P} \Tr \tau_N  \mathbb I_2 \bigg] = D(u_N).
     \end{gathered}\label{rearrangedequation}
\end{equation}

{\noindent\bf $ L^2 $-estimate.} We start by computing the $L^2 (\mathbb{T}^2)$ estimates for the solution of system \eqref{sys:galerkin} on its maximal (positive) time interval of existence. Taking the $L^2 $-inner product of \eqref{galerkin1} and \eqref{rearrangedequation} with $u_N$ and $ \tau_N $, respectively, applying integration by parts, and summing the results yields
\begin{equation}\label{est:galerkin-l2}
\begin{gathered}
\frac{1}{2} \frac{\rm d}{\dt} \bigg\lbrack \lVert u_N \rVert_{L^2}^2 + \mathcal E^{-1} \Vert \tau_N \Vert_{L^2}^2 + \alpha^2 \mathcal E^{-1} \Vert \nabla \tau_N \Vert_{L^2}^2 \biggr\rbrack \\
+ \int_{\mathbb T^2} \biggl\lbrack \frac{4 \mathcal D_\epsilon^N}{P} \vert \tau_N - \frac{1}{2}\Tr \tau_N \mathbb I_2 \vert^2 + \frac{\mathcal D_\epsilon^N}{2P} \vert \Tr \tau_N\vert^2 \biggr\rbrack \,\dx \\
= \int_{\mathbb{T}^2} \bigg[\mathcal{T}_a + \mathcal{T}_w^N + \Omega u^\perp_N - g \nabla H_0 \bigg] \cdot u_N \dx \leq C + C \Vert u_N \Vert_{L^2}^2, 
\end{gathered}
\end{equation}
thanks to the following estimate:
\begin{gather*}
\int_{\mathbb{T}^2} \mathcal{T}_w^N \cdot u_N \dx = c_w \rho_w \int_{\mathbb{T}^2} \lvert U_w - u_N \rvert \bigg[ (U_w - u_N) \cos \theta + (U_w - u_N)^\perp \sin \theta \bigg] \cdot u_N \dx \\
= c_w \rho_w \int_{\mathbb{T}^2} \lvert U_w - u_N \rvert \bigg[ U_w \cos \theta + U_w^\perp \sin \theta \bigg] \cdot u_N \dx \\ - c_w \rho_w \int_{\mathbb{T}^2} \lvert U_w - u_N \rvert \lvert u_N \rvert^2 \cos \theta \dx 
\lesssim 1 + \lVert u_N \rVert_{L^2}^2, 
\end{gather*}
where we use the fact that  $\cos \theta \geq 0$ since $\theta \in [0, \frac{\pi}{4}]$.
Notice that the constant $ C \in (0,\infty) $ in \eqref{est:galerkin-l2} depends only on $\Omega, g, P, H_0, U_a, U_w, c_a, c_w, \phi, \theta, \rho_a,$ and $\rho_w$, but not on $\epsilon$ and $N$. We remark that estimate \eqref{est:galerkin-l2} would also hold if one considers the intermediate model with the regularised and nonsimplified strain rate $\overline{\mathcal{D}}_\epsilon^N$ instead of $\mathcal{D}_\epsilon^N$ (as both have definite sign). Moreover, we also have used the following cancellation, which is similar in structure to the Oldroyd-B model,
\begin{equation} \label{eq:cancellation}
\int_{\mathbb{T}^2} \bigg[ u_N \cdot (\nabla \cdot \sigma_N) + \tau_N : D(u_N) \bigg] \dx = 0.
\end{equation}
Therefore, applying the Gr\"onwall inequality to estimate \eqref{est:galerkin-l2} on a time interval of existence $[0,T]$ of the approximating Galerkin solution leads to
\begin{equation}\label{L2bound}
\sup_{t \in [0,T]} \bigg[ \lVert u_N (t) \rVert_{L^2}^2 + \mathcal E^{-1} \left\lVert \sigma_N (t) + \frac{P}{2} \mathbb I_2 \right\rVert_{L^2}^2 + \alpha^2 \mathcal E^{-1} \left\lVert \nabla \bigg[ \sigma_N (t) + \frac{P}{2} \mathbb I_2 \bigg] \right\rVert_{L^2}^2 \bigg] \leq C e^{C T},
\end{equation}
uniformly in $N$ (and also $\epsilon$). Since the right-hand side remains finite for any $T \in (0,\infty)$, we conclude that the time interval of existence of the approximating Galerkin solutions is $[0,\infty)$ and estimate \eqref{L2bound} therefore holds for any $T \in (0,\infty)$.

\smallskip 
{\noindent\bf ${H}^1$-estimate.}
Now we turn to the $\dot{H}^1 (\mathbb{T}^2)$ estimates. Denote by 
\begin{equation}\label{def:relative-velocity}
    V_w := U_w - u_N. 
\end{equation}
Recalling \eqref{def:galerkin-oceanstress}, one can write  
\begin{equation}\label{est:h1-oceanstress}
\begin{gathered}
- \int_{\mathbb T^2} \mathbf{P}_N \mathcal T_w^N \cdot \Delta u_N \dx = \sum_{i=1}^2 \int_{\mathbb{T}^2} \mathbf{P}_N \partial_i \mathcal{T}_w^N \cdot \partial_i u_N \dx = \sum_{i=1}^2  \int_{\mathbb{T}^2} \partial_i \mathcal{T}_w^N \cdot \partial_i u_N \dx \\
 = \sum_{i=1}^2 \int_{\mathbb T^2} \partial_i \mathcal T_w^N \cdot \partial_i {U_w} \dx - \sum_{i=1}^2 \int_{\mathbb T^2} c_w \rho_w \bigl\lbrack  \partial_i (\lvert V_w \rvert V_w ) \cos\theta \bigr\rbrack \cdot \partial _i V_w \dx\\
 - \sum_{i=1}^2 \int_{\mathbb{T}^2} c_w \rho_w \partial_i(\vert V_w \vert V_w^\perp) \cdot \partial_i V_w \sin \theta \dx =: I_1 + I_2 + I_3,
\end{gathered}
\end{equation}
where we used that the Galerkin projection commutes with derivatives. For $ i =1,2 $, one can calculate
\begin{equation} \label{firstderivativeoceanstress}
\partial_i(\vert V_w \vert V_w ) = \vert V_w \vert \partial_i V_w + \dfrac{V_w \cdot \partial_i V_w}{\vert V_w \vert} V_w.
\end{equation}
Then directly one can estimate (where the constants can depend on $\lVert \partial_i {U_w} \rVert_{L^\infty}$)
\begin{gather}
    \label{est:I-1}
    \vert I_1 \vert \lesssim \Vert V_w \Vert_{L^2} \Vert \nabla V_w \Vert_{L^2} \lesssim 1 + \Vert{u_N}\Vert_{L^2}^2 + \Vert{\nabla u_N}\Vert_{L^2}^2,\\
    \label{est:I-3}
    \vert I_3 \vert \leq \left\lvert \sum_{i=1}^2 \int_{\mathbb{T}^2} c_w \rho_w \left( \vert V_w \vert \partial_i V_w^\perp + \dfrac{V_w \cdot \partial_i V_w}{\vert V_w \vert} V_w^\perp \right) \cdot \partial_i V_w \sin \theta \dx \right\rvert  \nonumber \\
    = \left\lvert \sum_{i=1}^2 \int_{\mathbb{T}^2} c_w \rho_w \dfrac{V_w \cdot \partial_i V_w}{\vert V_w \vert} V_w^\perp \cdot \partial_i V_w \sin \theta \dx \right\rvert \\
    \leq c_w \rho_w \vert \sin \theta \vert \int_{\mathbb T^2} \vert V_w \vert \vert \nabla V_w \vert^2 \dx.
\end{gather}
Meanwhile, substituting \eqref{firstderivativeoceanstress} in $ I_2 $, one can compute
\begin{equation}\label{est:I-2}
        I_2 = - c_w \rho_w \cos \theta \int_{\mathbb T^2} \left(\vert V_w \vert \vert\nabla V_w \vert^2 + \frac{1}{4} \frac{\left\vert \nabla \big( \vert V_w \vert^2 \big) \right\vert^2}{\vert V_w \vert} \right) \dx.
\end{equation}
Consequently, \eqref{est:h1-oceanstress}--\eqref{est:I-2} leads to
\begin{equation}
\label{est:h1-oceanstress-1}
    - \int_{\mathbb T^2} \mathbf{P}_N \mathcal T_w^N \cdot \Delta u_N \dx = \int_{\mathbb T^2} \nabla \mathcal T_w^N : \nabla u_N \lesssim 1 + \Vert{u_N}\Vert_{L^2}^2 + \Vert{\nabla u_N}\Vert_{L^2}^2 
\end{equation}
for $  \theta  $ satisfying 
\begin{equation}
\label{theta-cndt}
    \cos \theta - \vert \sin \theta \vert \geq 0,
\end{equation}
which holds for $\theta \in \left[0, \frac{\pi}{4} \right]$.
\smallskip 

Taking the $ L^2 $-inner product of \eqref{galerkin1} and \eqref{galerkin2} with $ - \Delta u_N$ and $- \Delta \sigma_N$, respectively, applying integration by parts, and summing up the results leads to the following estimate
\begin{align}
&\frac{1}{2} \frac{\rm d}{\dt} \bigg[ \lVert \nabla u_N \rVert_{L^2}^2 + \mathcal{E}^{-1} \lVert \nabla \sigma_N \rVert_{L^2}^2 + \alpha^2 \mathcal{E}^{-1} \lVert \Delta \sigma_N \rVert_{L^2}^2 \bigg] \nonumber \\
= &\int_{\mathbb{T}^2} \biggl\lbrace \biggl\lbrack \frac{4 \mathcal D_\epsilon^N}{P}(\sigma_N - \frac{1}{2} \Tr\sigma_N \mathbb I_2) \biggr\rbrack : \Delta \sigma_N + \frac{\mathcal D_\epsilon^N}{2P} \Tr\sigma_N \mathbb I_2 : \Delta \sigma_N + \frac{\mathcal D_\epsilon^N}{2} \Tr \Delta \sigma_N \biggr\rbrace \dx 
\nonumber \\ &+ \int_{\mathbb{T}^2} \nabla \bigg[\mathcal{T}_a + \Omega u^\perp_N - g \nabla H_0 \bigg] : \nabla u_N \dx + \int_{\mathbb T^2} \nabla \mathcal{T}_w^N :\nabla u_N \dx  \nonumber \\
&+ \sum_{i=1}^2 \int_{\mathbb{T}^2} \bigg[ \partial_i u_N \cdot (\nabla \cdot \partial_i \sigma_N) + \partial_i \sigma_N : D(\partial_i u_N) \bigg] dx \nonumber \\
\lesssim \, &\lVert \mathcal{D}_\epsilon^N \rVert_{L^2} \lVert \sigma_N \rVert_{L^\infty} \lVert \Delta \sigma_N \rVert_{L^2} + \lVert \Delta \sigma_N \rVert_{L^2}^2 + \lVert \nabla u_N \rVert_{L^2}^2 + \Vert u_N \Vert_{L^2}^2 + 1 \nonumber \\
\lesssim \, &\left[ \sqrt{\log (e + C_L R_1 \lVert \sigma_N \rVert_{H^2})} \lVert \sigma_N \rVert_{H^1} + 1 \right] \big[ \lVert \nabla u_N \rVert_{L^2}^2 + \lVert \Delta \sigma_N \rVert_{L^2}^2 \big] \nonumber \\
&+ \lVert \Delta \sigma_N \rVert_{L^2}^2 + \lVert \nabla u_N \rVert_{L^2}^2 + \Vert u_N \Vert_{L^2}^2 + 1, \nonumber \\
\lesssim \, &\left[ \log (e + C_L R_1 \lVert \sigma_N \rVert_{H^2}) + \lVert \sigma_N \rVert_{H^1}^2 + 1 \right] \big[ \lVert \nabla u_N \rVert_{L^2}^2 + \lVert \Delta \sigma_N \rVert_{L^2}^2 \big] \nonumber \\
&+ \lVert \Delta \sigma_N \rVert_{L^2}^2 + \lVert \nabla u_N \rVert_{L^2}^2 + \Vert u_N \Vert_{L^2}^2 + 1 \label{est:H-1}
\end{align}
where we have used \eqref{est:h1-oceanstress-1}, the Br\'ezis-Gallou\"et-Wainger inequality \eqref{ineq:bgw}, as well as Young's inequality. Note that we have introduced a constant $R_1 > 0$ in order to make the quantity $R_1 \sigma_N$ dimensionless (so that we can apply the Br\'ezis-Gallou\"et-Wainger inequality), and in particular $R_1 \sigma_N$ does not have physical units. One can choose $R_1 = \lVert \sigma_0 \rVert_{L^\infty}^{-1}$ for example (for nonzero initial data). We observe that for the ${H}^1$-estimate one can also treat the nonsimplified strain rate $\overline{\mathcal{D}}_\epsilon^N$ instead of $\mathcal{D}_\epsilon^N$ by using estimate \eqref{originalstrainrateestimate}. In addition, we have used the following cancellation identity
\begin{equation}
\sum_{i=1}^2 \int_{\mathbb{T}^2} \bigg[ \partial_i u_N \cdot (\nabla \cdot \partial_i \sigma_N) + \partial_i \sigma_N : D(\partial_i u_N) \bigg] \ {\rm d} x = 0.
\end{equation}

Thanks to the $L^2$-estimate in \eqref{L2bound}, we can apply the logarithmic Gr\"onwall inequality from Lemma \ref{logarithmicgronwall} to \eqref{est:H-1}, where we set 
\begin{equation*}
A(t) \coloneqq \lVert u_N \rVert_{H^1}^2 + \lVert \sigma_N \rVert_{H^2}^2, \quad f(t) \coloneqq 1, \quad g(t) \coloneqq 1 + \lVert \sigma_N \rVert_{H^1}^2, \quad h (t) \coloneqq 1.
\end{equation*}
This then leads to the following double-exponential bound
\begin{equation} \label{H1bound}
\sup_{t \in [0,T]} \biggl\lbrack \Vert \nabla u_N \Vert_{L^2}^2 + \mathcal E^{-1} \Vert \nabla \sigma_N \Vert_{L^2}^2  + \alpha^2 \mathcal E^{-1} \Vert \Delta \sigma_N \Vert_{L^2}^2 \biggr\rbrack
\leq C \exp (\exp (C T)),
\end{equation}
for some $ C $ independent of $N, \epsilon $, and $ T $, but depending on $ \mathcal E, \alpha $, and the initial data. Note that the double-exponential nature of the bound comes from the application of the logarithmic Gr\"onwall inequality to the single-exponential $L^2$-estimate \eqref{L2bound}.

\smallskip 
{\noindent\bf $ H^2 $-estimate.} 
One can compute directly that
\begin{equation}\label{est:dd-sigma}
\begin{aligned}
\nabla \mathcal{D}_\epsilon^N &= \frac{D(u_N) : \big[ \nabla D(u_N) \big]}{\mathcal{D}_\epsilon^N} = \frac{\frac{1}{2} \nabla \lvert D (u_N) \rvert^2}{\mathcal{D}_\epsilon^N}, \\
\lvert \nabla \mathcal{D}_\epsilon^N \rvert  &\leq \frac{\lvert D(u_N) \rvert \lvert \nabla D(u_N) \rvert }{\mathcal{D}_\epsilon^N} = \frac{\lvert D(u_N) \rvert}{\sqrt{\lvert D(u_N) \rvert^2 + \epsilon^2 }} \lvert \nabla D(u_N) \rvert \leq \lvert \nabla D(u_N) \rvert,  \\
\lVert \nabla \mathcal{D}_\epsilon^N \rVert_{L^2} &\lesssim \lVert \Delta u_N \rVert_{L^2}, \qquad \text{where the constant is independent of $ \epsilon $ and $ N $}.
\end{aligned}
\end{equation}
In addition, we recall the following computation from \cite[~p. 22-23]{liu}, for $i,j=1,2$, with $ V_w $ given in \eqref{def:relative-velocity},
\begin{equation}
    \begin{aligned}
         \partial^2_{ij} \big( \lvert V_w \rvert V_w \big) = & \; \lvert V_w  \rvert \partial^2_{ij} V_w + \frac{V_w  \cdot \partial_i V_w }{\lvert V_w \rvert} \partial_j V_w + \frac{V_w \cdot \partial_j V_w}{\lvert V_w  \rvert} \partial_i V_w  \\
        &+ \bigg( \frac{V_w  \cdot \partial^2_{ij} V_w + \partial_i V_w  \cdot \partial_j V_w }{\lvert V_w \rvert} - \frac{\big[ V_w  \cdot \partial_i V_w  \big] \cdot \big[ V_w  \cdot \partial_j V_w  \big]}{\lvert V_w \rvert^3} \bigg) V_w .
    \end{aligned}
\end{equation}
Therefore, one has the following estimate
\begin{align}
\big\lVert \nabla^2 \big( \lvert V_w \rvert V_w \big) \big\rVert_{L^2} &\leq \lVert V_w \rVert_{L^\infty} \lVert V_w \rVert_{H^2} + \lVert V_w \rVert_{W^{1,4}}^2 \nonumber \\
&\lesssim \big( \lVert V_w \rVert_{L^\infty} + \lVert  V_w \rVert_{H^1} \big) \lVert V_w \rVert_{H^2}, \label{est:dd-os}
\end{align}
where we have applied the Ladyzhenskaya inequality \eqref{ineq:2d-ldzsky}. 

Then, after taking the $ L^2 $-inner product of \eqref{galerkin1} and \eqref{galerkin2} with $ \Delta^2 u_N $ and $ \Delta^2 \sigma_N $, respectively, using integration by parts, and summing up the results, one obtains
\begin{align} \label{est:H-2}
&\frac{1}{2} \frac{\rm d}{\dt} \bigg[ \lVert \Delta u_N \rVert_{L^2}^2 + \mathcal{E}^{-1} \lVert \Delta \sigma_N \rVert_{L^2}^2 + \alpha^2 \mathcal{E}^{-1} \lVert \nabla \Delta \sigma_N \rVert_{L^2}^2 \bigg] \nonumber \\
=  &- \int_{\mathbb T^2}\bigg[\frac{4 \mathcal D_\epsilon^N}{P}\vert \Delta(\sigma_N - \frac{1}{2}\Tr \sigma_N \mathbb I_2)\vert^2 + \frac{\mathcal D_\epsilon^N}{2P} \vert \Delta\Tr \sigma_N \vert^2 \bigg] \dx \nonumber \\
 &- \sum_{i=1,2}\int_{\mathbb T^2} \biggl\lbrack \frac{4 \partial_i \mathcal D_\epsilon^N}{P}\partial_i(\sigma_N - \frac{1}{2}\Tr \sigma_N \mathbb I_2) + \frac{\partial_i \mathcal D_\epsilon^N}{2P} \partial_i \Tr \sigma_N \mathbb I_2  \biggr\rbrack : \Delta \sigma_N \dx \nonumber \\
 &+ \sum_{i=1,2}\int_{\mathbb T^2} \biggl\lbrack \frac{4 \partial_i \mathcal D_\epsilon^N}{P}(\sigma_N - \frac{1}{2}\Tr \sigma_N \mathbb I_2) + \frac{\partial_i \mathcal D_\epsilon^N}{2P}  \Tr \sigma_N \mathbb I_2  \biggr\rbrack : \partial_i \Delta \sigma_N \dx \nonumber \\ 
 &+ \int_{\mathbb{T}^2} \nabla \left( \frac{\mathcal D_\epsilon^N}{2} \mathbb I_2 \right) : \nabla \Delta \sigma_N \dx 
 + \int_{\mathbb{T}^2} \Delta \bigg[\mathcal{T}_a + \mathcal{T}_w^N + \Omega u^\perp_N - g \nabla H_0 \bigg] \cdot \Delta u_N \dx \nonumber \\
 &+ \int_{\mathbb{T}^2} \bigg[ \Delta u_N \cdot (\nabla \cdot \Delta \sigma_N) + \Delta \sigma_N : D(\Delta u_N) \bigg] \ {\rm d} x \nonumber \\
\lesssim \, &\lVert \nabla^2 u_N \rVert_{L^2} \lVert \nabla \sigma_N \rVert_{L^4} \lVert \Delta \sigma_N \rVert_{L^4} + \big[ \lVert \nabla^2 u_N \rVert_{L^2} \lVert \sigma_N \rVert_{L^\infty} +  \lVert \nabla^2 u_N \rVert_{L^2} \big] \lVert \nabla \Delta \sigma_N \rVert_{L^2}  \nonumber \\
&+ \big( 1 + \lVert u_N \rVert_{L^\infty} + \lVert \nabla u_N \rVert_{L^2} \big) \bigl( 1+ \lVert u_N \rVert_{H^2} \bigr) \lVert u_N \rVert_{H^2} 
+ \lVert \Delta u_N \rVert_{L^2}^2  + 1 \nonumber \\
\lesssim &\bigg[ \lVert \sigma_N \rVert_{H^1} \big( 1 + \sqrt{\log \big( e + C_L R_1 \lVert \sigma_N \rVert_{H^2} \big)} \big) + \lVert u_N \rVert_{H^1} \big( 1 + \sqrt{\log \big( e + C_L R_2 \lVert u_N \rVert_{H^2} \big)} \big)  \nonumber \\
 &+ \Vert \sigma_N \Vert_{H^2} + 1 \bigg] 
\cdot \big[ \lVert u_N \rVert_{H^2}^2 + \lVert \sigma_N \rVert_{H^3}^2 \big] + 1 \nonumber \\
\lesssim &\bigg[ \lVert \sigma_N \rVert_{H^1}^2 + \log \big( e + C_L R_1 \lVert \sigma_N \rVert_{H^2} \big) + \lVert u_N \rVert_{H^1}^2 + \log \big( e + C_L R_2 \lVert u_N \rVert_{H^2} \big)   \nonumber \\
 &+ \Vert \sigma_N \Vert_{H^2} + 1 \bigg] 
\cdot \big[ \lVert u_N \rVert_{H^2}^2 + \lVert \sigma_N \rVert_{H^3}^2 \big] + 1,
\end{align}
where we have used \eqref{est:dd-os}, the Br\'ezis-Gallou\"et-Wainger inequality \eqref{ineq:bgw}, the Sobolev embedding theorem as well as Young's inequality. The terms on the second line are negative definite, and in the seventh line we estimate the term $\lVert \nabla^2 u_N \rVert_{L^2} \lVert \nabla \sigma_N \rVert_{L^4} \lVert \Delta \sigma_N \rVert_{L^4}$ by means of the Sobolev embedding theorem, which gives $\lVert u_N \rVert_{H^2} \lVert \sigma_N \rVert_{H^2} \lVert \sigma_N \rVert_{H^3}$. Applying Young's inequality leads to some of the terms on the last line of estimate \eqref{est:H-2}. Due to the monotonicity of the logarithm, we can replace the arguments $\lVert \sigma_N \rVert_{H^2}$ and $\lVert u_N \rVert_{H^2}$ by their sum, which facilitates the application of the logarithmic Grönwall inequality. 

Note that as before, we have used parameters $R_1, R_2 > 0$ in order to make the quantities $R_1 \sigma_N$ and $R_2 u_N$ dimensionless (such that the resulting quantities do not have physical units), so that we can apply the Br\'ezis-Gallou\"et-Wainger inequality. One can choose $R_2 = \lVert u_0 \rVert_{L^\infty}^{-1}$ for example (in the case of nonzero initial data). Moreover, similarly to before we have used the following cancellation
\begin{equation}
\int_{\mathbb{T}^2} \bigg[ \Delta u_N \cdot (\nabla \cdot \Delta \sigma_N) + \Delta \sigma_N : D(\Delta u_N) \bigg] \ {\rm d} x = 0.
\end{equation}

From \eqref{est:H-2}, by using the $L^2$-estimate \eqref{L2bound}, the $ H^1$-estimate \eqref{H1bound}, and the logarithmic Gr\"onwall inequality in Lemma \ref{logarithmicgronwall}, we find the following bound:
\begin{equation} \label{galerkinbound1}
\sup_{t \in [0,T]} \biggl\lbrack \lVert u_N (t) \rVert_{H^2}^2 + \lVert \sigma_N (t) \rVert_{H^3}^2 \biggr\rbrack \leq C \exp (\exp (\exp (C T))),
\end{equation}
for some constant $ C $ which is independent of $ N, \epsilon $, and $ T $, but which depends on $ \mathcal E, \alpha $, and the initial data. Note that the triple-exponential growth in time of the bound originates from the single-exponential respectively double-exponential growth in time of the $L^2$ and $H^1$-estimates, and the subsequent application of the logarithmic Gr\"onwall inequality.

\smallskip 

{\noindent\bf Estimate on the time derivative and summary.}
Thanks to \eqref{galerkinbound1} and system \eqref{sys:galerkin}, one can obtain directly that
\begin{equation}
    \label{galerkinbound-2}
    \sup_{t \in [0,T]} \biggl\lbrack \Vert \partial_t u_N(t) \Vert_{H^2}^2 + \Vert \partial_t \sigma_N - \alpha^2 \partial_t\Delta \sigma_N \Vert_{H^1}^2 \biggr\rbrack \leq C \exp (\exp (\exp (C T))).
\end{equation}
This estimate follows because all the other terms (i.e. except the time derivative) in equation \eqref{galerkin1} can be shown to lie in $C([0,T];H^2(\mathbb{T}^2))$, while all the other terms in equation \eqref{galerkin2} are elements of $C([0,T];H^1(\mathbb{T}^2))$ (because of estimate \eqref{galerkinbound1}). 
Meanwhile, by directly applying integration by parts we deduce that
\begin{gather*}
    \Vert \partial_t \sigma_N - \alpha^2 \partial_t \Delta \sigma_N \Vert_{H^1}^2 = \Vert \partial_t \sigma_N \Vert^2_{H^1} + \alpha^4 \Vert \partial_t \Delta \sigma_N \Vert^2_{H^1} + 2 \alpha^2 \Vert \partial_t \nabla \sigma_N \Vert_{H^1}^2.
\end{gather*}
Hence, equations \eqref{galerkinbound1} and \eqref{galerkinbound-2} yield
\begin{equation}\label{galerkinbound-total}
    \begin{gathered}
        \sup_{t \in [0,T]} \biggl\lbrack \lVert u_N (t) \rVert_{H^2} + \lVert \sigma_N (t) \rVert_{H^3} + 
        \Vert \partial_t u_N(t) \Vert_{H^2} + \Vert   \partial_t\sigma_N \Vert_{H^3}
        \biggr\rbrack\\
        \leq C \exp (\exp (\exp (C T))),
    \end{gathered}
\end{equation}
again for some constant $ C $, which is independent of $ N, \epsilon $, and $ T $, but depends on $ \mathcal E, \alpha $, and the initial data.

\subsection{Well-posedness of the intermediate system \eqref{sys:simplifiedmodel}}\label{sec:wel-posedness-intermediate}

{\noindent\bf Passing the limit $ N \rightarrow \infty $ and existence of the strong solution.}
For any fixed $ T \in (0,\infty) $ and $ \epsilon \in (0,\infty) $, from the bound \eqref{galerkinbound-total} and by applying the Aubin-Lions compactness lemma (see \cite[Theorem II.5.16]{boyer}), one has the following convergence results: for a suitable subsequence of $ N \rightarrow \infty $, 
\begin{align}
    \label{weak-u-N} u_N & \overset{\ast}{\rightharpoonup} u &  \text{weakly-$* $ in}& ~ L^\infty ((0,T);H^2 (\mathbb{T}^2)), \\
    \label{weak-dt-u-N} \partial_t u_N & \overset{\ast}{\rightharpoonup} \partial_t u &  \text{weakly-$* $ in}& ~ L^\infty ((0,T);H^2 (\mathbb{T}^2)),  \\
    \label{strong-n-N} u_N & \rightarrow u & \text{strongly in}& ~ C([0,T];H^1(\mathbb T^2)), \\
    \label{weak-sgm-N} \sigma_N &\overset{\ast}{\rightharpoonup} \sigma & \text{weakly-$* $ in}& ~ L^\infty ((0,T);H^3 (\mathbb{T}^2)), \\
    \label{strong-sgm-N} \sigma_N &\rightarrow \sigma & \text{strongly in}& ~ C([0,T];H^2(\mathbb T^2)),\\
    \label{weak-dt-sgm-N} \partial_t \sigma_N & \overset{\ast}{\rightharpoonup} \partial_t \sigma & \text{weakly-$* $ in}& ~ L^\infty ((0,T);H^3 (\mathbb{T}^2)),
\end{align}
where $ u \in L^\infty((0,T);H^2(\mathbb T^2))$, $ \partial_t u \in L^\infty((0,T);H^2(\mathbb T^2)) $, $ \sigma \in L^\infty((0,T);H^3(\mathbb T^2)) $, and $ \partial_t \sigma \in L^\infty((0,T);H^3(\mathbb T^2)) $ with the same estimates as in \eqref{galerkinbound-total} with $ (u_N,\sigma_N) $ replaced by $ (u,\sigma) $, i.e.,
\begin{equation}\label{intermediate-total}
    \begin{gathered}
        \sup_{t \in [0,T]} \biggl\lbrack \lVert u (t) \rVert_{H^2} + \lVert \sigma (t) \rVert_{H^3} + 
        \Vert \partial_t u(t) \Vert_{H^2} + \Vert   \partial_t\sigma \Vert_{H^3}
        \biggr\rbrack\\
        \leq C \exp (\exp (\exp (C T))),
    \end{gathered}
\end{equation}
for some constant $ C $ independent of $ \epsilon $ and $ T $, but depending on $ \mathcal E, \alpha $, and the initial data. 
We also have $u \in C([0,T]; H^2 (\mathbb{T}^2))$ and $\sigma \in C([0,T]; H^3 (\mathbb{T}^2))$ thanks to \cite[Lemma 1.3.2, p. 196]{sohr}.

It is clear that one can pass to the limit in the linear terms of the Galerkin system \eqref{sys:galerkin}. To show the convergence of the nonlinear terms, first we observe that $\mathcal{D}_\epsilon^N \rightarrow \mathcal{D}_\epsilon$ in $C ([0,T]; L^2 (\mathbb{T}^2))$. In fact, due to Lemma \ref{strainratelemma}, we have by using \eqref{strong-n-N}
\begin{equation*}
\lVert \mathcal{D}_\epsilon^N - \mathcal{D}_\epsilon \rVert_{C([0,T]; L^2(\mathbb{T}^2))} \leq C \lVert u_N - u \rVert_{C([0,T]; H^1 (\mathbb{T}^2))} \xrightarrow[]{N \rightarrow \infty} 0.
\end{equation*}
Notice that this bound holds pointwise in time, as will generally be the case for the estimates in this part of the paper. We will therefore omit the temporal part of the norm in this section. Next, by using \eqref{strong-sgm-N} one can estimate that
\begin{align*}
&\bigg\lVert \frac{4 \mathcal{D}_\epsilon^N}{P} \sigma_N - \frac{4}{P} \mathcal{D}_\epsilon \sigma \bigg\rVert_{L^2} \lesssim \lVert \mathcal{D}_\epsilon^N - \mathcal{D}_\epsilon \rVert_{L^2} \lVert \sigma_N \rVert_{L^\infty} + \lVert \mathcal{D}_\epsilon \rVert_{L^2} \lVert \sigma_N - \sigma \rVert_{L^\infty} \xrightarrow[]{N \rightarrow \infty} 0,
\end{align*}
by the convergence of both $\mathcal{D}_\epsilon^N$ as well as $\sigma_N$. Similarly, we know that the other nonlinear terms in the evolution equation for the stress tensor, i.e. $-\frac{3}{2 P} \mathcal{D}_\epsilon^N \Tr \sigma_N I$ and $\frac{\mathcal{D}_\epsilon^N}{2} I$, converge to $-\frac{3}{2 P} \mathcal{D}_\epsilon \Tr \sigma I$ and $\frac{\mathcal{D}_\epsilon}{2} I$ in $L^\infty ((0,T); L^2 (\mathbb{T}^2))$ respectively. 

Finally, we need to show that the ocean stress term $\mathcal{T}_w^N$ converges. We have
\begin{align*}
\lVert \mathcal{T}_{w}^N - \mathcal{T}_{w} \rVert_{L^2} &\lesssim \lVert \lvert U_w - u \rvert - \lvert U_w - u_N \rvert \lVert_{L^4} \left\lVert \bigg[ (U_w - u_N) \cos \theta + (U_w - u_N)^\perp \sin \theta \bigg] \right\rVert_{L^4} \\
&+ \lVert \lvert U_w - u \rvert \rVert_{L^4} \left\lVert \bigg[ (u - u_N) \cos \theta + (u - u_N)^\perp \sin \theta \bigg] \right\rVert_{L^4} \\
&\lesssim \lVert u - u_N \rVert_{L^4} \xrightarrow[]{N \rightarrow \infty} 0.
\end{align*}
Therefore all the terms in the Galerkin system \eqref{sys:galerkin} converge, and $ (u,\sigma ) $ solves the intermediate system \eqref{sys:simplifiedmodel}. 
This concludes the proof of the existence of a strong solution to the intermediate model \eqref{sys:simplifiedmodel}. 

\smallskip 
{\noindent\bf Stability and uniqueness.}

Suppose that $(u_1, \sigma_1), (u_2, \sigma_2) \in C ([0,T]; H^2 (\mathbb{T}^2)) \times C ([0,T]; H^3 (\mathbb{T}^2))$ are two strong solutions to the intermediate model \eqref{sys:simplifiedmodel}. Let
\begin{equation*}
\delta u \coloneqq u_1 - u_2, \quad \delta \sigma \coloneqq \sigma_1 - \sigma_2. 
\end{equation*}
The system satisfied by the difference $(\delta u, \delta \sigma)$ is then given by
\begin{subequations}\label{sys:sb-uq}
\begin{align}
\label{eq:sb-uq-1}
\partial_t \delta u  = \nabla \cdot \delta \sigma + \mathcal{T}_{w,1} - \mathcal{T}_{w,2} + \Omega (\delta u)^\perp ,  \\
\label{eq:sb-uq-2}
\frac{1}{\mathcal{E}} \partial_t \big( \delta \sigma - \alpha^2 \Delta \delta \sigma \big) + \frac{4 \mathcal D_{\epsilon,1}}{P}(\sigma_1 - \frac{1}{2} \Tr \sigma_1 \mathbb I_2) - \frac{4 \mathcal D_{\epsilon,2}}{P}(\sigma_2 - \frac{1}{2} \Tr \sigma_2 &\mathbb I_2 ) \\
+ \frac{\mathcal D_{\epsilon,1}}{2P} \Tr \sigma_1 \mathbb I_2 - \frac{\mathcal D_{\epsilon,2}}{2P} \Tr \sigma_2 \mathbb I_2 
+ \frac{\mathcal{D}_{\epsilon,1} - \mathcal{D}_{\epsilon,2}}{2} &\mathbb I_2 = D(\delta u),
\end{align}
where we have introduced the following notation for $i=1,2$
\begin{align}
\mathcal{T}_{w,i} &= c_w \rho_w \lvert U_w - u_i \rvert \big[ (U_w - u_i) \cos \theta + (U_w - u_i)^\perp \sin \theta \big], \\
\mathcal{D}_{\epsilon,i} &= \sqrt{\lvert D(u_i) \rvert^2 + \epsilon^2 }.
\end{align} 
\end{subequations}
We observe that $\partial_t (\delta u) \in L^\infty ((0,T); H^2 (\mathbb{T}^2))$ and $\partial_t (\delta \sigma) \in L^\infty ((0,T); H^3 (\mathbb{T}^2))$, and therefore we can take the $L^2$-inner product of equations \eqref{eq:sb-uq-1} and \eqref{eq:sb-uq-2} with $\delta u$ respectively $\delta \sigma$. We therefore compute the $L^2$-estimates on the difference $(\delta u, \delta \sigma)$ as follows (using a cancellation similar to \eqref{eq:cancellation})
\begin{equation}\label{est:L-2-diff-1}
\begin{gathered}
\frac{1}{2} \frac{\rm d}{\dt} \bigg[ \lVert \delta u \rVert_{L^2}^2 + \mathcal{E}^{-1} \lVert \delta \sigma \rVert_{L^2}^2 + \mathcal{E}^{-1} \alpha^2 \lVert \nabla \delta \sigma \rVert_{L^2}^2 \bigg] = 
- \int_{\mathbb{T}^2} \frac{\mathcal{D}_{\epsilon,1} - \mathcal{D}_{\epsilon,2}}{2} \Tr \delta \sigma \dx \\
 - \int_{\mathbb{T}^2} \bigg[ \frac{4 \mathcal D_{\epsilon,1}}{P}(\sigma_1 - \frac{1}{2} \Tr \sigma_1 \mathbb I_2) - \frac{4 \mathcal D_{\epsilon,2}}{P}(\sigma_2 - \frac{1}{2} \Tr \sigma_2 \mathbb I_2) \bigg] : \delta \sigma \dx \\
 - \int_{\mathbb T^2} \bigg[ \frac{\mathcal D_{\epsilon,1}}{2P} \Tr \sigma_1 \mathbb I_2 - \frac{\mathcal D_{\epsilon,2}}{2P} \Tr \sigma_2 \mathbb I_2 \bigg] : \delta \sigma \dx \\
+ \int_{\mathbb{T}^2} \big( \mathcal{T}_{w,1} - \mathcal{T}_{w,2} \big) \cdot \delta u \dx  =: J_1 + J_2 + J_3 + J_4.
\end{gathered}
\end{equation}
Applying H\"older's inequality, inequality \eqref{ineq:difference}, and the Sobolev embedding theorem yields
\begin{align*}
    \vert J_3 \vert & \lesssim \int_{\mathbb T^2} \vert \mathcal D_{\epsilon,1} - \mathcal D_{\epsilon,2} \vert \lvert \Tr \sigma_1 \vert \vert \Tr \delta \sigma \vert + \vert \mathcal D_{\epsilon,2} \vert \vert \Tr \delta \sigma \vert^2 \dx \\
    & \lesssim \lVert \sigma_1 \rVert_{L^\infty} \lVert \mathcal{D}_{\epsilon,1} - \mathcal{D}_{\epsilon,2} \rVert_{L^2} \lVert \delta \sigma \rVert_{L^2} + \lVert \mathcal{D}_{\epsilon,2} \rVert_{L^2} \lVert \delta \sigma \rVert_{L^4}^2 \\
    & \lesssim \lVert \sigma_1 \rVert_{L^\infty} \big[ \lVert \nabla \delta u \rVert_{L^2}^2 + \lVert \delta \sigma \rVert_{L^2}^2 \big] + \lVert \nabla u_2 \rVert_{L^2} \big[ \lVert \delta \sigma \rVert_{L^2}^2 + \lVert \nabla \delta \sigma \rVert_{L^2}^2 \big].
\end{align*}
Similarly, we find
\begin{align*}
\vert J_2 \vert 
& \lesssim \lVert \sigma_1 \rVert_{L^\infty} \big[ \lVert \nabla \delta u \rVert_{L^2}^2 + \lVert \delta \sigma \rVert_{L^2}^2 \big] + \lVert \nabla u_2 \rVert_{L^2} \big[ \lVert \delta \sigma \rVert_{L^2}^2 + \lVert \nabla \delta \sigma \rVert_{L^2}^2 \big],\\
 \vert J_1 \vert 
& \lesssim \lVert \nabla \delta u \rVert_{L^2}  \lVert \delta \sigma \rVert_{L^2},
\intertext{and}
\vert J_4 \vert & \lesssim (\lVert u_1 \rVert_{L^\infty} + \lVert u_2 \rVert_{L^\infty} + \lVert U_w \rVert_{L^\infty}) \lVert \delta u \rVert_{L^2}^2.
\end{align*}
Combining these bounds we get that
\begin{equation}\label{est:L-2-diff}
\begin{gathered}
    \frac{1}{2} \frac{\rm d}{\dt} \bigg[ \lVert \delta u \rVert_{L^2}^2 + \mathcal{E}^{-1} \lVert \delta \sigma \rVert_{L^2}^2 + \mathcal{E}^{-1} \alpha^2 \lVert \nabla \delta \sigma \rVert_{L^2}^2 \bigg] \lesssim (\lVert \sigma_1 \rVert_{L^\infty} + 1) \big[ \lVert \nabla \delta u \rVert_{L^2}^2 + \lVert \delta \sigma \rVert_{L^2}^2 \big] \\
    + \lVert \nabla u_2 \rVert_{L^2} \big[ \lVert \delta \sigma \rVert_{L^2}^2 + \lVert \nabla \delta \sigma \rVert_{L^2}^2 \big] + (\lVert u_1 \rVert_{L^\infty} + \lVert u_2 \rVert_{L\infty} + \lVert U_w \rVert_{L^\infty}) \lVert \delta u \rVert_{L^2}^2.
\end{gathered}
\end{equation}

\smallskip 
Meanwhile, to calculate the $ H^1 $ estimate of the difference $ (\delta u , \delta \sigma ) $, similar to \eqref{est:H-1}, we have (by using a cancellation similar to \eqref{eq:cancellation})
\begin{equation} \label{est:H-1-diff-1}
\begin{aligned}
&\frac{1}{2} \frac{\rm d}{\dt} \bigg[ \lVert \nabla \delta u \rVert_{L^2}^2 + \mathcal{E}^{-1} \lVert \nabla \delta \sigma \rVert_{L^2}^2 + \alpha^2 \mathcal{E}^{-1} \lVert \Delta \delta \sigma \rVert_{L^2}^2 \bigg] \\
= &\int_{\mathbb{T}^2} \frac{\mathcal{D}_{\epsilon,1} - \mathcal{D}_{\epsilon,2}}{2} \Tr \Delta \delta \sigma \dx + \int_{\mathbb{T}^2} \bigg[ \frac{4 \mathcal D_{\epsilon,1}}{P}(\sigma_1 - \frac{1}{2} \Tr \sigma_1 \mathbb I_2) - \frac{4 \mathcal D_{\epsilon,2}}{P}(\sigma_2 - \frac{1}{2} \Tr \sigma_2 \mathbb I_2) \bigg] : \Delta \delta \sigma \dx \\
 &+ \int_{\mathbb T^2} \bigg[ \frac{\mathcal D_{\epsilon,1}}{2P} \Tr \sigma_1 \mathbb I_2 - \frac{\mathcal D_{\epsilon,2}}{2P} \Tr \sigma_2 \mathbb I_2 \bigg] : \Delta \delta \sigma \dx + \int_{\mathbb{T}^2} \nabla\big( \mathcal{T}_{w,1} - \mathcal{T}_{w,2} \big) : \nabla \delta u \dx \\
\eqqcolon & \; J_5 + J_6 + J_7 + J_8 .
\end{aligned}\end{equation}
Similarly to the $L^2$-estimate, we establish (by using inequality \eqref{ineq:difference} and the Sobolev embedding theorem)
\begin{align*}
\vert J_5 + J_6 + J_7 \vert  
&\lesssim \lVert \sigma_1 \rVert_{L^\infty} \lVert \mathcal{D}_{\epsilon,1} - \mathcal{D}_{\epsilon,2} \rVert_{L^2} \lVert \Delta \delta \sigma \rVert_{L^2} + \lVert \mathcal{D}_{\epsilon,2} \rVert_{L^2} \lVert \delta \sigma \rVert_{L^\infty} \lVert \Delta \delta \sigma \rVert_{L^2} \\
& \qquad \qquad + \lVert \mathcal{D}_{\epsilon,1} - \mathcal{D}_{\epsilon,2} \rVert_{L^2} \lVert \Delta \delta \sigma \rVert_{L^2} \\
&\lesssim \big( \lVert \sigma_1 \rVert_{L^\infty} + \lVert \nabla u_2 \rVert_{L^2} + 1 \big) \big[ \lVert \nabla \delta u \rVert_{L^2}^2 + \lVert \delta \sigma \rVert_{L^2}^2 + \lVert \Delta \delta \sigma \rVert_{L^2}^2 \big].
\end{align*} 
Finally, we estimate $ J_8 $. Recalling the notation \eqref{def:relative-velocity}, we denote by
\begin{equation}\label{def:rlv-i}
    V_{w,i} \coloneqq U_w - u_i, \qquad i = 1,2,
\end{equation}
and hence $ V_{w,2} - V_{w,1} = \delta u $. 
Thus, by using identity \eqref{firstderivativeoceanstress}, one can compute, for $i=1,2$,
\begin{align*}
\partial_i (\vert V_{w,1} \vert V_{w,1} - \vert V_{w,2} \vert V_{w,2}) = & - \vert V_{w,1} \vert \partial_i \delta u + \biggl[ \vert V_{w,1} \vert - \vert V_{w,2} \vert \biggr] \partial_i V_{w,2}  \\
&- \frac{V_{w,1} \cdot \partial_i V_{w,1}}{\vert V_{w,1} \vert} \delta u - \frac{V_{w,1} \cdot \partial_i \delta u}{\vert V_{w,1} \vert} V_{w,2} - \frac{\delta u \cdot \partial_i V_{w,2}}{\vert V_{w,2} \vert} V_{w,2} \\
&- \bigg[\vert V_{w,1} \vert - \vert V_{w,2} \vert \biggr] \frac{V_{w,1} \cdot \partial_i V_{w,2}}{\vert V_{w,1} \vert \vert V_{w,2}\vert} V_{w,2}.
\end{align*} 
Note that in the above computation we have used the following relation
\begin{align*}
\frac{V_{w,1} \cdot \partial_i V_{w,1}}{\vert V_{w,1} \vert} V_{w,1} &- \frac{V_{w,2} \cdot \partial_i V_{w,2}}{\vert V_{w,2} \vert} V_{w,2} = \frac{V_{w,1} \cdot \partial_i V_{w,1}}{\vert V_{w,1} \vert} (V_{w,1} - V_{w,2}) + \frac{V_{w,1} \cdot \partial_i (V_{w,1} - V_{w,2})}{\vert V_{w,1} \vert} V_{w,2} \\
&+ \bigg(\frac{1}{\vert V_{w,1} \vert} - \frac{1}{\vert V_{w,2} \vert} \bigg) (V_{w,1} \cdot \partial_i V_{w,2}) V_{w,2} + \frac{(V_{w,1} - V_{w,2}) \cdot \partial_i V_{w,2}}{\vert V_{w,2} \vert} V_{w,2}.
\end{align*}
Therefore, one has that, for $i=1,2$, 
\begin{equation}\label{est:diff-tsnoc-1}
    \Vert \partial_i (\mathcal T_{w,1} - \mathcal T_{w,2} ) \Vert_{L^2} \lesssim (\Vert V_{w,1} \Vert_{L^\infty} + \Vert V_{w,2} \Vert_{L^\infty}) \Vert \partial_i \delta u \Vert_{L^2} + (\Vert \partial_i V_{w,1} \Vert_{L^4} + \Vert \partial_i V_{w,2} \Vert_{L^4}) \Vert \delta u \Vert_{L^4}.
\end{equation}
Hence one has by using \eqref{def:rlv-i}
\begin{equation}\label{est:diff-tsnc-2}
\begin{aligned}
\vert J_8 \vert 
&\lesssim \lVert \nabla \delta u \rVert_{L^2} \bigg[  \big( \lVert U_w - u_1 \rVert_{L^\infty} + \lVert U_w - u_2 \rVert_{L^\infty} \big) \lVert \nabla \delta u \rVert_{L^2}\\
& \qquad \qquad +  \big( \lVert \nabla (U_w - u_1) \rVert_{L^4} + \lVert \nabla (U_w - u_2) \rVert_{L^4} \big) \lVert \delta u \rVert_{L^4} \bigg] \\
&\lesssim \big( \lVert U_w - u_1 \rVert_{H^2} + \lVert U_w - u_2 \rVert_{H^2} \big) \lVert \delta u \rVert_{H^1}^2.
\end{aligned}
\end{equation}
Thus, we have derived the following from estimate \eqref{est:H-1-diff-1}, 
\begin{equation}
    \label{est:H-1-diff}
    \begin{gathered}
        \frac{1}{2} \frac{\rm d}{\dt} \bigg[ \lVert \nabla \delta u \rVert_{L^2}^2 + \mathcal{E}^{-1} \lVert \nabla \delta \sigma \rVert_{L^2}^2 + \alpha^2 \mathcal{E}^{-1} \lVert \Delta \delta \sigma \rVert_{L^2}^2 \bigg] \\
        \lesssim (1 + \Vert \sigma_1 \Vert_{H^2} + \Vert \sigma_2 \Vert_{H^2} + \Vert u_1 \Vert_{H^2} + \Vert u_2 \Vert_{H^2}) (\Vert \delta u\Vert_{H^1}^2 + \Vert \delta \sigma\Vert_ {H^2}^2 ).
    \end{gathered}
\end{equation}

\bigskip 

Consequently, combining estimates \eqref{est:L-2-diff} and \eqref{est:H-1-diff} leads to 
\begin{equation}\label{est:diff-total}
\begin{gathered}
\frac{1}{2} \frac{\rm d}{\dt} \bigg[ \lVert \delta u \rVert_{L^2}^2 + \lVert \nabla \delta u \rVert_{L^2}^2 + \mathcal{E}^{-1} \lVert \delta \sigma \rVert_{L^2}^2 + \mathcal{E}^{-1} (1 + \alpha^2) \lVert \nabla \delta \sigma \rVert_{L^2}^2 + \mathcal{E}^{-1} \alpha^2 \lVert \Delta \delta \sigma \rVert_{L^2}^2 \bigg] \\
\lesssim (1 + \Vert \sigma_1 \Vert_{H^2} + \Vert \sigma_2 \Vert_{H^2} + \Vert u_1 \Vert_{H^2} + \Vert u_2 \Vert_{H^2}) (\Vert \delta u\Vert_{H^1}^2 + \Vert \delta \sigma\Vert_ {H^2}^2 ),
\end{gathered} \end{equation}
from which the stability (i.e. the continuous dependence on the initial data) and uniqueness of the strong solution follows. This concludes the proof of the global well-posedness of the intermediate model \eqref{sys:simplifiedmodel}.

\section{Proof of Theorem \ref{regularisedevplimit}} \label{regularisedevpsection}

We consider a sequence of solutions $ \{ (u_\epsilon, \sigma_\epsilon) \}$ of the intermediate system \eqref{sys:simplifiedmodel} such that $\epsilon \rightarrow 0$. Thanks to estimate \eqref{intermediate-total}, which is independent of $ \epsilon $, by the Aubin-Lions lemma and Banach-Alaoglu theorem we have the following convergence results
\begin{align}
    u_\epsilon & \overset{\ast}{\rightharpoonup} u &  \text{weakly-$* $ in}& ~ L^\infty ((0,T);H^2 (\mathbb{T}^2)), \\
    \partial_t u_\epsilon & \overset{\ast}{\rightharpoonup} \partial_t u &  \text{weakly-$* $ in}& ~ L^\infty ((0,T);H^2 (\mathbb{T}^2)),  \\
    u_\epsilon & \rightarrow u & \text{strongly in}& ~ C([0,T];H^1(\mathbb T^2)), \\
    \sigma_\epsilon &\overset{\ast}{\rightharpoonup} \sigma & \text{weakly-$* $ in}& ~ L^\infty ((0,T);H^3 (\mathbb{T}^2)), \\
    \sigma_\epsilon &\rightarrow \sigma & \text{strongly in}& ~ C([0,T];H^2(\mathbb T^2)),\\
    \partial_t \sigma_\epsilon & \overset{\ast}{\rightharpoonup} \partial_t \sigma & \text{weakly-$* $ in}& ~ L^\infty ((0,T);H^3 (\mathbb{T}^2)),
\end{align}
where the limits $ u \in L^\infty((0,T);H^2(\mathbb T^2))$, $ \partial_t u \in L^\infty((0,T);H^2(\mathbb T^2)) $, $ \sigma \in L^\infty((0,T);H^3(\mathbb T^2)) $, and $ \partial_t \sigma \in L^\infty((0,T);H^3(\mathbb T^2)) $ satisfy estimate \eqref{intermediate-total}. It is left to verify that $ (u,\sigma) $ is a solution to the Voigt-EVP model \eqref{sys:limitEVP}.

First we prove that $\sqrt{\lvert D(u_\epsilon) \rvert^2 + \epsilon^2} \rightarrow \lvert D(u) \rvert$ in $L^\infty ((0,T); L^2 (\mathbb{T}^2))$. Indeed, note that for all $t \in [0,T]$
\begin{align*}
&\left\lVert \sqrt{\lvert D(u_\epsilon) \rvert^2 + \epsilon^2} - \lvert D(u) \rvert \right\rVert_{L^2} \leq \left\lVert \sqrt{\lvert D(u_\epsilon) \rvert^2 + \epsilon^2} - \sqrt{\lvert D(u) \rvert^2 + \epsilon^2} \right\rVert_{L^2} \\
&+ \left\lVert \sqrt{\lvert D(u) \rvert^2 + \epsilon^2} - \lvert D(u) \rvert \right\rVert_{L^2} \leq C \lVert u_\epsilon - u \rVert_{H^1} + \left\lVert \epsilon \right\rVert_{L^2} \xrightarrow[]{\epsilon \rightarrow 0} 0,
\end{align*}
where we have used Lemma \ref{strainratelemma}, the convergence $u_\epsilon \rightarrow u$ in $C([0,T];H^1(\mathbb{T}^2))$ and the elementary inequality $\sqrt{y + z} \leq \sqrt{y} + \sqrt{z}$ for $y, z \geq 0$. It is clear that the linear terms in equations \eqref{simplifiedmodel1}-\eqref{simplifiedmodel2} converge. We now show that the nonlinear terms in the equation for the stress tensor converge. In particular, for all $t \in [0,T]$ we have
\begin{align*}
\bigg\lVert \frac{4 \sqrt{\lvert D(u_\epsilon) \rvert^2 + \epsilon^2}}{P} \sigma_\epsilon - \frac{4}{P} \lvert D(u) \rvert \sigma \bigg\rVert_{L^2} &\lesssim \lVert \sqrt{\lvert D(u_\epsilon) \rvert^2 + \epsilon^2} - \lvert D(u) \rvert \rVert_{L^2} \lVert \sigma_\epsilon \rVert_{L^\infty} \\
&+ \lVert \lvert D(u) \rvert \rVert_{L^2} \lVert \sigma_\epsilon - \sigma \rVert_{L^\infty} \xrightarrow[]{\epsilon \rightarrow 0} 0.
\end{align*}
In a similar fashion, one can show that the terms $-\frac{3}{2} \frac{\sqrt{\lvert D(u_\epsilon) \rvert^2 + \epsilon^2}}{P}\Tr \sigma_\epsilon \mathbb I_2$ and $\frac{\sqrt{\lvert D(u_\epsilon) \rvert^2 + \epsilon^2}}{2} \mathbb I_2$ converge to $-\frac{3}{2} \frac{\lvert D(u) \rvert}{P}\Tr \sigma \mathbb I_2$ and $\frac{\lvert D(u) \rvert}{2} \mathbb I_2$ in $L^\infty ((0,T); L^2 (\mathbb{T}^2))$. Finally, we show that the ocean stress term $\mathcal{T}_w^\epsilon$ converges. We find
\begin{align*}
\lVert \mathcal{T}_{w,\epsilon} - \mathcal{T}_{w} \rVert_{L^2} \lesssim \; &\lVert \lvert U_w - u \rvert - \lvert U_w - u_\epsilon \rvert \lVert_{L^4} \left\lVert \bigg[ (U_w - u_\epsilon) \cos \theta + (U_w - u_\epsilon)^\perp \sin \theta \bigg] \right\rVert_{L^4} \\
&+ \lVert \lvert U_w - u \rvert \rVert_{L^4} \left\lVert \bigg[ (u - u_\epsilon) \cos \theta + (u - u_\epsilon)^\perp \sin \theta \bigg] \right\rVert_{L^4} \\
\lesssim \; &\lVert u - u_\epsilon \rVert_{L^4} \xrightarrow[]{\epsilon \rightarrow 0} 0,
\end{align*}
where again the convergence is uniform in time for the fixed time interval $[0,T]$. Therefore we conclude that $(u,\sigma)$ is a global strong solution of the Voigt-EVP model.

\smallskip 

To show the (global) well-posedness, one can repeat similar arguments as in Section \ref{sec:wel-posedness-intermediate}. If the Voigt-EVP model would have two strong solutions $(u_1, \sigma_1), (u_2, \sigma_2) \in C ([0,T]; H^2 (\mathbb{T}^2)) \times C ([0,T]; H^3 (\mathbb{T}^2))$, then we again consider the difference
\begin{equation*}
\delta u \coloneqq u_1 - u_2, \quad \delta \sigma \coloneqq \sigma_1 - \sigma_2. 
\end{equation*}
Similarly to before, one can check that the difference $(\delta u, \delta \sigma)$ satisfies the following system
\begin{subequations}
\begin{align}
\partial_t \delta u  = \nabla \cdot \delta \sigma + \mathcal{T}_{w,1} - \mathcal{T}_{w,2} + \Omega (\delta u)^\perp ,  \\
\frac{1}{\mathcal{E}} \partial_t \big( \delta \sigma - \alpha^2 \Delta \delta \sigma \big) + \frac{4 \mathcal D_1}{P}(\sigma_1 - \frac{1}{2} \Tr \sigma_1 \mathbb I_2) - \frac{4 \mathcal D_2}{P}(\sigma_2 - \frac{1}{2} \Tr \sigma_2 &\mathbb I_2 ) \\
+ \frac{\mathcal D_1}{2P} \Tr \sigma_1 \mathbb I_2 - \frac{\mathcal D_2}{2P} \Tr \sigma_2 \mathbb I_2 
+ \frac{\mathcal{D}_1 - \mathcal{D}_2}{2} &\mathbb I_2 = D(\delta u),
\end{align}
\end{subequations}
where we have introduced the following notation for $i=1,2$
\begin{align}
\mathcal{T}_{w,i} &= c_w \rho_w \lvert U_w - u_i \rvert \big[ (U_w - u_i) \cos \theta + (U_w - u_i)^\perp \sin \theta \big], \\
\mathcal{D}_i &= \lvert D(u_i) \rvert.
\end{align} 
Subsequently, by proceeding entirely analogously to section \ref{sec:wel-posedness-intermediate} (the only difference being that we use Lemma \ref{strainratelemma} in the case $\epsilon = 0$ rather than $\epsilon > 0$), we can obtain the analogous version of estimate \eqref{est:diff-total}, which is given by 
\begin{equation}
\begin{gathered}
\frac{1}{2} \frac{\rm d}{\dt} \bigg[ \lVert \delta u \rVert_{L^2}^2 + \lVert \nabla \delta u \rVert_{L^2}^2 + \mathcal{E}^{-1} \lVert \delta \sigma \rVert_{L^2}^2 + \mathcal{E}^{-1} (1 + \alpha^2) \lVert \nabla \delta \sigma \rVert_{L^2}^2 + \mathcal{E}^{-1} \alpha^2 \lVert \Delta \delta \sigma \rVert_{L^2}^2 \bigg] \\
\lesssim (1 + \Vert \sigma_1 \Vert_{H^2} + \Vert \sigma_2 \Vert_{H^2} + \Vert u_1 \Vert_{H^2} + \Vert u_2 \Vert_{H^2}) (\Vert \delta u\Vert_{H^1}^2 + \Vert \delta \sigma\Vert_ {H^2}^2 ).
\end{gathered} \end{equation}
From this estimate we can deduce the stability and uniqueness for the Voigt-EVP model. This finishes the proof of Theorem \ref{regularisedevplimit}.

\begin{remark}
We remark that the proof of the convergence of the solutions in the viscosity cutoff limit $\epsilon \rightarrow 0$, is in some sense related to the approach in \cite{berselli}, in which the local well-posedness of a model for non-Newtonian incompressible flows with power-law stress tensor was established uniformly in the cutoff parameter of the viscosity. 
\end{remark}

\section{Discussion and conclusion} \label{conclusion}
In this paper we commenced the rigorous analytical study of one of the fundamental models of sea-ice dynamics, namely the elastic-viscous-plastic (EVP) sea-ice model. We introduced a regularised version of the EVP model, by means of the Voigt regularisation, which is inspired by the original elastic regularisation of the Hibler model in \cite{hunke} (as both regularisations formally do not impact the asymptotic dynamics). We have shown this Voigt-EVP model to be globally well-posed. One interesting question, which we leave to future work, is whether the Voigt-EVP model also has a suitable notion of weak solution. 

Due to the (formal) linear ill-posedness of the EVP model as discussed in Section \ref{illposednesssection}, it seems necessary to introduce a regularised version of the EVP model of the type \eqref{limitEVP1}-\eqref{limitEVP3}, in order to obtain a well-posed sea-ice model. Finally, in order to conclude the proof of the global well-posedness we took the limit of the viscosity cutoff parameter $\epsilon$ going to zero (while keeping the Voigt regularisation), and obtain that the Voigt-EVP model is globally well-posed.

Moreover, we would like to mention that over time several related models to the EVP model have been developed in the sea-ice literature, see for example \cite{hunkelinearization,bouillon2009}. The approach developed here will also have potential applications in the rigorous study of several of these related sea-ice models.

We will make several remarks on straightforward extensions of our results, which have been omitted for reasons of brevity but can be treated by using the approach developed in this paper.
\paragraph{About the advection term}
If one includes an advection term in the momentum equation of the intermediate system \eqref{simplifiedmodel1}-\eqref{simplifiedmodel3}, by applying the same methods as in this paper, one can prove the local well-posedness of the intermediate system by using similar estimates as in this paper. Note however that the existence time will most likely depend on $\epsilon$ (unlike for Corollary \ref{cor:intermediatesys}). This therefore prohibits taking the limit $\epsilon \rightarrow 0$, as we have done in the proof of Theorem \ref{regularisedevplimit}.
\paragraph{Using the original strain rate}
We remark that the replacement of $\overline{\mathcal{D}}$ by $\lvert D(u) \rvert$ in the Voigt-EVP model is done merely for the sake of convenience. In fact, by using the approach from this paper one can prove the global well-posedness of the model \eqref{simplifiedmodel1}-\eqref{simplifiedmodel3} in which $\mathcal{D}_\epsilon$ has been replaced by
\begin{equation*}
\sqrt{\overline{\mathcal{D}}^2 + \epsilon^2} = \sqrt{\frac{2}{\overline e^2} \vert D(u) - \frac{1}{2} [\Tr D(u)] \mathbb I_2 \vert^2 + \vert \Tr D(u) \vert^2 + \epsilon^2}.
\end{equation*}
This can be done by using similar estimates as in Section \ref{sec:intermediatesys}, but by using the following inequalities
\begin{equation} \label{originalstrainrateestimate}
\left\lVert \sqrt{\overline{\mathcal{D}}^2 + \epsilon^2} \right\rVert_{L^2} \lesssim \lVert \nabla u \rVert_{L^2} + \epsilon, \quad \left\lVert \nabla  \left( \sqrt{\overline{\mathcal{D}}^2 + \epsilon^2} \right) \right\rVert_{L^2} \lesssim \lVert \Delta u \rVert_{L^2}.
\end{equation}
Subsequently, one can proceed as in the proof of Theorem \ref{regularisedevplimit} and send $\epsilon \rightarrow 0$ in order to obtain a solution of the Voigt-EVP model with the original strain rate.
\paragraph{Different regularisations of the strain rate}
We also note that, in addition to \eqref{viscosityregularisation}, different regularisations of the strain rate have been used in the sea-ice literature. For example, one can set maximal values for the shear and bulk viscosities, as is done in \cite{hibler}. This means that, instead of $\mathcal{D}_\epsilon$, one uses
\begin{equation} \label{hiblerregularisation}
\max \{ \overline{\mathcal{D}}, \epsilon \} = \frac{\overline{\mathcal{D}} + \epsilon}{2} + \frac{1}{2} \lvert \overline{\mathcal{D}} - \epsilon \rvert.
\end{equation}
One can choose to regularise this cutoff by the following function for some $\gamma \in (0, \epsilon)$
\begin{equation}
\overline{\mathcal{D}}_{\epsilon,\gamma} \coloneqq \frac{\overline{\mathcal{D}} + \epsilon + \gamma}{2} + \frac{1}{2} \sqrt{\lvert \overline{\mathcal{D}} - \epsilon \rvert^2 + \gamma^2}.
\end{equation}
Then one finds 
\begin{equation*}
\left\lVert \overline{\mathcal{D}}_{\epsilon,\gamma} \right\rVert_{L^2} \lesssim \lVert \nabla u \rVert_{L^2} + \gamma + \epsilon, \quad \left\lVert \nabla \overline{\mathcal{D}}_{\epsilon,\gamma} \right\rVert_{L^2} \lesssim \lVert \Delta u \rVert_{L^2}.
\end{equation*}
It is straightforward to repeat the proof of Theorem \ref{regularisedevplimit} to obtain a global strong solution of the Voigt-EVP model with the viscosity regularisation from equation \eqref{hiblerregularisation} by taking the limit $\gamma \rightarrow 0$. Moreover, one can also choose to regularise the viscosities by means of the hyperbolic tangent, as was introduced in \cite{lemieux2009}, see also \cite{chatta}, but again one can treat this regularisation by similar methods as in this paper. Generally speaking, one can treat any $C^2$ approximation of $\overline{\mathcal{D}}$ by relying on the approach of this paper.
\section*{Acknowledgements}
The authors would like to thank Elizabeth Hunke for stimulating discussions at Texas A\&M University, and for her guidance through the various models used by practitioners. The authors would also like to thank the anonymous referees for their many comments and suggestions, which have improved the paper. D.W.B. acknowledges support from the Cambridge Trust and the Cantab Capital Institute for Mathematics of Information. D.W.B. and E.S.T. have benefitted from the inspiring environment of the CRC 1114 ``Scaling Cascades in Complex Systems'', Project Number 235221301, Project C09, funded by the Deutsche Forschungsgemeinschaft (DFG). M.T. also acknowledges the funding by the DFG within the CRC 1114 ``Scaling Cascades in Complex Systems'', Project Number 235221301, Project B09. Moreover, this work was also supported in part by the DFG Research Unit FOR 5528 on Geophysical Flows. D.W.B. and M.T. would like to acknowledge the kind hospitality of the Department of Mathematics, Texas A\&M University, and M.T. also acknowledges the generous hospitality of the Department of Applied Mathematics and Theoretical Physics, University of Cambridge, where part of this work was completed.

\footnotesize
\bibliographystyle{acm}
\bibliography{main}

@article{antontsev,
  title={The classical {Kelvin--Voigt} problem for incompressible fluids with unknown non-constant density: Existence, uniqueness and regularity},
  author={Antontsev, SN and De Oliveira, HB and Khompysh, Kh},
  journal={Nonlinearity},
  volume={34},
  number={5},
  pages={3083},
  year={2021},
  publisher={IOP Publishing}
}

@article{bathory,
  title={Large data existence theory for three-dimensional unsteady flows of rate-type viscoelastic fluids with stress diffusion},
  author={Bathory, Michal and Bul{\'\i}{\v{c}}ek, Miroslav and M{\'a}lek, Josef},
  journal={Advances in Nonlinear Analysis},
  volume={10},
  number={1},
  pages={501--521},
  year={2020},
  publisher={De Gruyter}
}

@article{berselli,
  title={Existence of strong solutions for incompressible fluids with shear dependent viscosities},
  author={Berselli, Luigi C and Diening, Lars and Ru{\v{z}}i{\v{c}}ka, Michael},
  journal={Journal of Mathematical Fluid Mechanics},
  volume={12},
  pages={101--132},
  year={2010},
  publisher={Springer}
}

@article{binz2022,
  title={Coupling of the ocean and atmosphere dynamics with sea ice},
  author={Binz, Tim and Brandt, Felix and Hieber, Matthias},
  journal={arXiv:2209.13150},
  year={2022}
}

@article{binz2024,
  title={Rigorous analysis of the interaction problem of sea ice with a rigid body},
  author={Binz, Tim and Brandt, Felix and Hieber, Matthias},
  journal={Mathematische Annalen},
  volume={389},
  number={1},
  pages={591--625},
  year={2024},
  publisher={Springer}
}

@article{bouillon,
  title={The elastic--viscous--plastic method revisited},
  author={Bouillon, Sylvain and Fichefet, Thierry and Legat, Vincent and Madec, Gurvan},
  journal={Ocean Modelling},
  volume={71},
  pages={2--12},
  year={2013},
  publisher={Elsevier}
}

@article{bouillon2009,
  title={An elastic--viscous--plastic sea ice model formulated on {Arakawa B and C} grids},
  author={Bouillon, Sylvain and Maqueda, Miguel Angel Morales and Legat, Vincent and Fichefet, Thierry},
  journal={Ocean Modelling},
  volume={27},
  number={3-4},
  pages={174--184},
  year={2009},
  publisher={Elsevier}
}

@book{boyer,
  title={Mathematical Tools for the Study of the Incompressible Navier-Stokes Equations and Related Models},
  author={Boyer, Franck and Fabrie, Pierre},
  volume={183},
  year={2012},
  publisher={Springer}
}

@article{boutrosillposedness,
  title={Linear ill-posedness of the elastic-viscous-plastic sea-ice model in {Sobolev} spaces},
  author={Boutros, Daniel W and Liu, Xin and Thomas, Marita and Titi, Edriss S},
  note={In preparation},
  year={2025}
}

@article{brandt,
  title={Rigorous analysis and dynamics of {Hibler’s} sea ice model},
  author={Brandt, Felix and Disser, Karoline and Haller-Dintelmann, Robert and Hieber, Matthias},
  journal={Journal of Nonlinear Science},
  volume={32},
  number={4},
  pages={50},
  year={2022},
  publisher={Springer}
}

@article{brandt2025,
  title={Well-posedness of {Hibler’s} parabolic-hyperbolic sea ice model},
  author={Brandt, Felix},
  journal={Journal of Evolution Equations},
  volume={25},
  number={3},
  pages={1--31},
  year={2025},
  publisher={Springer}
}

@article{brezis1979,
  title={Nonlinear {Schr{\"o}dinger} evolution equations},
  author={Brézis, Haim and Gallouët, Thierry},
  journal={Nonlinear Analysis: Theory, Methods \& Applications},
  volume={4},
  number={4},
  pages={677--681},
  year={1980},
  publisher={Pergamon}
}

@article{brezis1980,
  title={A note on limiting cases of {Sobolev} embeddings and convolution inequalities},
  author={Br{\'e}zis, Ha{\"\i}m and Wainger, Stephen},
  journal={Communications in Partial Differential Equations},
  volume={5},
  number={7},
  pages={773--789},
  year={1980},
  publisher={Taylor \& Francis}
}

@article{bulicek,
  title={Global well-posedness for two-dimensional flows of viscoelastic rate-type fluids with stress diffusion},
  author={Bul{\'\i}{\v{c}}ek, Miroslav and M{\'a}lek, Josef and Rodriguez, Casey},
  journal={Journal of Mathematical Fluid Mechanics},
  volume={24},
  number={3},
  pages={61},
  year={2022},
  publisher={Springer}
}

@article{bulicek2024,
  title={On three dimensional flows of viscoelastic fluids of {Giesekus} type},
  author={Bul{\'\i}{\v{c}}ek, Miroslav and Los, Tom{\'a}{\v{s}} and M{\'a}lek, Josef},
  journal={Nonlinearity},
  volume={38},
  number={1},
  pages={1--42},
  year={2024},
  publisher={IOP Publishing}
}

@article{cao2020,
  title={Global well-posedness of the {3D} primitive equations with horizontal viscosity and vertical diffusivity},
  author={Cao, Chongsheng and Li, Jinkai and Titi, Edriss S},
  journal={Physica D: Nonlinear Phenomena},
  volume={412},
  pages={132606},
  year={2020},
  publisher={Elsevier}
}

@article{chatta,
  title={Linear well posedness of regularized equations of sea-ice dynamics},
  author={Chatta, Soufiane and Khouider, Boualem and Kesri, M’hamed},
  journal={Journal of Mathematical Physics},
  volume={64},
  number={5},
  year={2023},
  publisher={AIP Publishing}
}

@article{chatta2025,
  title={Well posedness of the {regularized-Hibler} model of sea-ice dynamics},
  author={Chatta, Soufiane and Khouider, Boualem},
  year={2025},
  journal={Authorea preprint},
  note={\url{https://doi.org/10.22541/essoar.175157581.19836982/v1}}
}

@article{chemin,
  title={About lifespan of regular solutions of equations related to viscoelastic fluids},
  author={Chemin, Jean-Yves and Masmoudi, Nader},
  journal={SIAM journal on mathematical analysis},
  volume={33},
  number={1},
  pages={84--112},
  year={2001},
  publisher={SIAM}
}

@article{chen,
  title={Superfloe parameterization with physics constraints for uncertainty quantification of sea ice floes},
  author={Chen, Nan and Deng, Quanling and Stechmann, Samuel N},
  journal={SIAM/ASA Journal on Uncertainty Quantification},
  volume={10},
  number={4},
  pages={1384--1409},
  year={2022},
  publisher={SIAM}
}

@article{constantin2012,
  title={Note on global regularity for {2D Oldroyd-B} fluids with diffusive stress},
  author={Constantin, Peter and Kliegl, Markus},
  journal={Archive for Rational Mechanics and Analysis},
  volume={206},
  pages={725–-740},
  year={2012},
  publisher={Springer}
}

@article{constantin2021,
  title={High {Reynolds number and high Weissenberg number Oldroyd-B} model with dissipation},
  author={Constantin, Peter and Wu, Jiahong and Zhao, Jiefeng and Zhu, Yi},
  journal={Journal of Evolution Equations},
  volume={21},
  pages={2787--2806},
  year={2021},
  publisher={Springer}
}

@article{constantin2023,
  title={Magnetic relaxation of a {Voigt--MHD} system},
  author={Constantin, Peter and Pasqualotto, Federico},
  journal={Communications in Mathematical Physics},
  volume={402},
  number={2},
  pages={1931--1952},
  year={2023},
  publisher={Springer}
}

@article{coon,
  title={Modeling the pack ice as an elastic-plastic material},
  author={Coon, MD and Maykut, GA and Pritchard, RS and Rothrock, D A and Thorndike, A S},
  year={1974},
  journal={AIDJEX Bulletin},
  number={24},
  pages={1--105}
}

@article{danilov,
  title={Finite-element sea ice model {(FESIM)}, version 2},
  author={Danilov, Sergey and Wang, Qiang and Timmermann, Ralph and Iakovlev, Nikolay and Sidorenko, Dmitry and Kimmritz, Madlen and Jung, Thomas and Schr{\"o}ter, Jens},
  journal={Geoscientific Model Development},
  volume={8},
  number={6},
  pages={1747--1761},
  year={2015},
  publisher={Copernicus GmbH G{\"o}ttingen, Germany}
}

@article{deng,
  title={Particle-continuum multiscale modeling of sea ice floes},
  author={Deng, Quanling and Stechmann, Samuel N and Chen, Nan},
  journal={Multiscale Modeling \& Simulation},
  volume={22},
  number={1},
  pages={230--255},
  year={2024},
  publisher={SIAM}
}

@article{elgindi,
  title={Global regularity for some {Oldroyd-B} type models},
  author={Elgindi, Tarek M and Rousset, Frederic},
  journal={Communications on Pure and Applied Mathematics},
  volume={68},
  number={11},
  pages={2005--2021},
  year={2015},
  publisher={Wiley Online Library}
}

@book{evans,
  title={Partial Differential Equations},
  author={Evans, Lawrence C},
  volume={19},
  year={2010},
  publisher={American Mathematical Society}
}

@article{golden,
  title={Modeling sea ice},
  author={Golden, Kenneth M and Bennetts, Luke G and Cherkaev, Elena and Eisenman, Ian and Feltham, Daniel and Horvat, Christopher and Hunke, Elizabeth and Jones, Christopher and Perovich, Donald K and Ponte-Castaneda, Pedro and Strong, Courtenay and Sulsky, Deborah and Wells, Andrew J},
  year={2020},
  journal={Notices of the American Mathematical Society},
  volume={67},
  number={10},
  pages={1535--1555}
}

@article{gray1995,
  title={Stability of the viscous-plastic sea ice rheology},
  author={Gray, JMNT and Killworth, Peter D},
  journal={Journal of physical oceanography},
  volume={25},
  number={5},
  pages={971--978},
  year={1995}
}

@article{gray1999,
  title={Loss of hyperbolicity and ill-posedness of the viscous--plastic sea ice rheology in uniaxial divergent flow},
  author={Gray, JMNT},
  journal={Journal of Physical Oceanography},
  volume={29},
  number={11},
  pages={2920--2929},
  year={1999},
  publisher={American Meteorological Society}
}

@article{guba,
  title={On well-posedness of the viscous--plastic sea ice model},
  author={Guba, Oksana and Lorenz, Jens and Sulsky, Deborah},
  journal={Journal of physical oceanography},
  volume={43},
  number={10},
  pages={2185--2199},
  year={2013}
}

@article{guillope,
  title={Existence results for the flow of viscoelastic fluids with a differential constitutive law},
  author={Guillop{\'e}, Colette and Saut, Jean-Claude},
  year={1989},
  journal={Nonlinear Analysis: Theory, Methods \& Applications}, 
  volume={15},
  number={9},
  pages={849-869}
}

@article{hadamard,
  title={Sur les probl{\`e}mes aux d{\'e}riv{\'e}es partielles et leur signification physique},
  author={Hadamard, Jacques},
  journal={Princeton University Bulletin},
  pages={49--52},
  year={1902},
  publisher={Princeton University}
}

@article{heorton,
  title={Retrieving sea ice drag coefficients and turning angles from in situ and satellite observations using an inverse modeling framework},
  author={Heorton, Harold DBS and Tsamados, Michel and Cole, ST and Ferreira, Ana MG and Berbellini, Andrea and Fox, M and Armitage, Thomas WK},
  journal={Journal of Geophysical Research: Oceans},
  volume={124},
  number={8},
  pages={6388--6413},
  year={2019},
  publisher={Wiley Online Library}
}

@article{hibler,
  title={A dynamic thermodynamic sea ice model},
  author={Hibler, W D},
  journal={Journal of Physical Oceanography},
  volume={9},
  number={4},
  pages={815--846},
  year={1979},
  publisher={American Meteorological Society}
}

@article{hunke,
  title={An elastic--viscous--plastic model for sea ice dynamics},
  author={Hunke, Elizabeth C and Dukowicz, John K},
  journal={Journal of Physical Oceanography},
  volume={27},
  number={9},
  pages={1849--1867},
  year={1997}
}

@article{hunkelinearization,
  title={Viscous--plastic sea ice dynamics with the {EVP} model: Linearization issues},
  author={Hunke, Elizabeth C},
  journal={Journal of Computational Physics},
  volume={170},
  number={1},
  pages={18--38},
  year={2001},
  publisher={Elsevier}
}

@article{hunke2002,
  title={The elastic--viscous--plastic sea ice dynamics model in general orthogonal curvilinear coordinates on a sphere—Incorporation of metric terms},
  author={Hunke, Elizabeth C and Dukowicz, John K},
  journal={Monthly Weather Review},
  volume={130},
  number={7},
  pages={1848--1865},
  year={2002}
}

@article{hunke2020,
  title={Should sea-ice modeling tools designed for climate research be used for short-term forecasting?},
  author={Hunke, Elizabeth and Allard, Richard and Blain, Philippe and Blockley, Ed and Feltham, Daniel and Fichefet, Thierry and Garric, Gilles and Grumbine, Robert and Lemieux, Jean-Fran{\c{c}}ois and Rasmussen, Till and others},
  journal={Current Climate Change Reports},
  volume={6},
  pages={121--136},
  year={2020},
  publisher={Springer}
}

@article{ignatova,
  title={{2D Voigt Boussinesq} Equations},
  author={Ignatova, Mihaela},
  journal={Journal of Mathematical Fluid Mechanics},
  volume={26},
  number={1},
  pages={15},
  year={2024},
  publisher={Springer}
}

@article{ip,
  title={On the effect of rheology on seasonal sea-ice simulations},
  author={Ip, Chi F and Hibler III, William D and Flato, Gregory M},
  journal={Annals of Glaciology},
  volume={15},
  pages={17--25},
  year={1991},
  publisher={Cambridge University Press}
}

@article{kimmritz,
  title={On the convergence of the modified elastic--viscous--plastic method for solving the sea ice momentum equation},
  author={Kimmritz, Madlen and Danilov, Sergey and Losch, Martin},
  journal={Journal of Computational Physics},
  volume={296},
  pages={90--100},
  year={2015},
  publisher={Elsevier}
}

@article{koldunov,
  title={Fast {EVP} solutions in a high-resolution sea ice model},
  author={Koldunov, Nikolay V and Danilov, Sergey and Sidorenko, Dmitry and Hutter, Nils and Losch, Martin and Goessling, Helge and Rakowsky, Natalja and Scholz, Patrick and Sein, Dmitry and Wang, Qiang and others},
  journal={Journal of Advances in Modeling Earth Systems},
  volume={11},
  number={5},
  pages={1269--1284},
  year={2019},
  publisher={Wiley Online Library}
}

@article{kreyscher,
  title={Results of the Sea Ice Model Intercomparison Project: Evaluation of sea ice rheology schemes for use in climate simulations},
  author={Kreyscher, Martin and Harder, Markus and Lemke, Peter and Flato, Gregory M},
  journal={Journal of Geophysical Research: Oceans},
  volume={105},
  number={C5},
  pages={11299--11320},
  year={2000},
  publisher={Wiley Online Library}
}

@article{kupferman,
  title={A {Beale-Kato-Madja} breakdown criterion for an {Oldroyd-B} fluid in the creeping flow regime},
  author={Kupferman, Raz and Mangoubi, Claude and Titi, Edriss S},
  year={2008},
  journal={Communications in Mathematical Sciences},
  volume={6},
  number={1},
  pages={235--256},
}

@article{larios2010,
  title={On the higher-order global regularity of the inviscid {Voigt-regularization} of three-dimensional hydrodynamic models},
  author={Larios, Adam and Titi, Edriss S},
  journal={Discrete and Continuous Dynamical Systems - Series B},
  volume={14},
  number={2},
  pages={603--627},
  year={2010}
}

@article{larios2010boussinesq,
  title={Global Well-posedness for The {2D Boussinesq} System Without Heat Diffusion and With Either Anisotropic Viscosity or Inviscid {Voigt-$\alpha$} Regularization},
  author={Larios, Adam and Lunasin, Evelyn and Titi, Edriss S},
  journal={arXiv:1010.5024},
  year={2010}
}

@article{larios2014,
  title={Higher-order global regularity of an inviscid {Voigt-regularization} of the three-dimensional inviscid resistive magnetohydrodynamic equations},
  author={Larios, Adam and Titi, Edriss S},
  journal={Journal of Mathematical Fluid Mechanics},
  volume={16},
  number={1},
  pages={59--76},
  year={2014},
  publisher={Springer}
}

@article{lemieux,
  title={A comparison of the {Jacobian-free Newton--Krylov} method and the {EVP} model for solving the sea ice momentum equation with a viscous-plastic formulation: A serial algorithm study},
  author={Lemieux, Jean-Fran{\c{c}}ois and Knoll, Dana A and Tremblay, Bruno and Holland, David M and Losch, Martin},
  journal={Journal of Computational Physics},
  volume={231},
  number={17},
  pages={5926--5944},
  year={2012},
  publisher={Elsevier}
}

@article{lemieux2009,
  title={Numerical convergence of viscous-plastic sea ice models},
  author={Lemieux, Jean-Fran{\c{c}}ois and Tremblay, Bruno},
  journal={Journal of Geophysical Research: Oceans},
  volume={114},
  number={C5},
  year={2009},
  publisher={Wiley Online Library}
}

@article{lemieux2024,
  title={{CICE} on a {C-grid:} new momentum, stress, and transport schemes for {CICEv6.5}},
  author={Lemieux, Jean-Fran{\c{c}}ois and Lipscomb, William H and Craig, Anthony and Bailey, David A and Hunke, Elizabeth C and Blain, Philippe and Rasmussen, Till AS and Bentsen, Mats and Dupont, Fr{\'e}d{\'e}ric and Hebert, David and Allard, Richard},
  journal={Geoscientific Model Development},
  volume={17},
  number={17},
  pages={6703--6724},
  year={2024},
  publisher={Copernicus Publications G{\"o}ttingen, Germany}
}

@book{lepparanta,
  title={The drift of sea ice},
  author={Lepp{\"a}ranta, Matti},
  year={2011},
  publisher={Springer}
}

@article{lin,
  title={On the initial-boundary value problem of the incompressible viscoelastic fluid system},
  author={Lin, Fanghua and Zhang, Ping},
  journal={Communications on Pure and Applied Mathematics},
  volume={61},
  number={4},
  pages={539--558},
  year={2008},
  publisher={Wiley Online Library}
}

@article{lions,
  title={Global solutions for some {Oldroyd} models of {non-Newtonian} flows},
  author={Lions, PL and Masmoudi, N},
  journal={Chinese Annals of Mathematics},
  volume={21},
  number={02},
  pages={131--146},
  year={2000},
  publisher={World Scientific}
}

@article{lipscomb,
  title={Ridging, strength, and stability in high-resolution sea ice models},
  author={Lipscomb, William H and Hunke, Elizabeth C and Maslowski, Wieslaw and Jakacki, Jaromir},
  journal={Journal of Geophysical Research: Oceans},
  volume={112},
  number={C3},
  year={2007},
  publisher={Wiley Online Library}
}

@article{liu,
  title={Well-posedness of {Hibler’s} dynamical sea-ice model},
  author={Liu, Xin and Thomas, Marita and Titi, Edriss S},
  journal={Journal of Nonlinear Science},
  volume={32},
  number={4},
  pages={49},
  year={2022},
  publisher={Springer}
}

@article{losch,
  title={On solving the momentum equations of dynamic sea ice models with implicit solvers and the elastic--viscous--plastic technique},
  author={Losch, Martin and Danilov, Sergey},
  journal={Ocean Modelling},
  volume={41},
  pages={42--52},
  year={2012},
  publisher={Elsevier}
}

@article{losch2010,
  title={On the formulation of sea-ice models. Part 1: Effects of different solver implementations and parameterizations},
  author={Losch, Martin and Menemenlis, Dimitris and Campin, Jean-Michel and Heimbach, Patick and Hill, Chris},
  journal={Ocean Modelling},
  volume={33},
  number={1-2},
  pages={129--144},
  year={2010},
  publisher={Elsevier}
}

@article{losch2014,
  title={A parallel {Jacobian-free Newton--Krylov} solver for a coupled sea ice-ocean model},
  author={Losch, Martin and Fuchs, Annika and Lemieux, Jean-Fran{\c{c}}ois and Vanselow, Anna},
  journal={Journal of Computational Physics},
  volume={257},
  pages={901--911},
  year={2014},
  publisher={Elsevier}
}

@article{mehlmann,
  title={Sea-ice dynamics on triangular grids},
  author={Mehlmann, Carolin and Korn, Peter},
  journal={Journal of Computational Physics},
  volume={428},
  pages={110086},
  year={2021},
  publisher={Elsevier}
}

@article{oskolkov1977,
  title={The uniqueness and global solvability of boundary-value problems for the equations of motion for aqueous solutions of polymers},
  author={Oskolkov, AP},
  journal={Journal of Mathematical Sciences},
  volume={8},
  number={4},
  pages={427--455},
  year={1977},
  publisher={Springer}
}

@article{oskolkov1997,
  title={Nonlocal problems for the equations of {Kelvin-Voight} fluids and their $\varepsilon$-approximations},
  author={Oskolkov, AP},
  journal={Journal of Mathematical Sciences},
  volume={87},
  pages={3393--3408},
  year={1997},
  publisher={Springer}
}

@article{piersanti,
  title={On the dynamics of grounded shallow ice sheets: Modeling and analysis},
  author={Piersanti, Paolo and Temam, Roger},
  journal={Advances in Nonlinear Analysis},
  volume={12},
  number={1},
  pages={20220280},
  year={2023},
  publisher={De Gruyter}
}

@article{pritchard,
  title={Stability of sea ice dynamics models: Viscous-plastic rheology, replacement closure, and tensile cutoff},
  author={Pritchard, Robert S},
  journal={Journal of Geophysical Research: Oceans},
  volume={110},
  number={C12},
  year={2005},
  publisher={Wiley Online Library}
}

@article{seinen,
  title={Improving the {Jacobian free Newton--Krylov} method for the viscous--plastic sea ice momentum equation},
  author={Seinen, Clint and Khouider, Boualem},
  journal={Physica D: Nonlinear Phenomena},
  volume={376},
  pages={78--93},
  year={2018},
  publisher={Elsevier}
}

@article{showalter1970a,
  title={Pseudoparabolic partial differential equations},
  author={Showalter, Ralph Edwin and Ting, T Wo},
  journal={SIAM Journal on Mathematical Analysis},
  volume={1},
  number={1},
  pages={1--26},
  year={1970},
  publisher={SIAM}
}

@article{showalter1970b,
  title={Well-posed problems for a partial differential equation of order $2m+1$},
  author={Showalter, R E},
  journal={SIAM Journal on Mathematical Analysis},
  volume={1},
  number={2},
  pages={214--231},
  year={1970},
  publisher={SIAM}
}

@article{showalter1975,
  title={Nonlinear degenerate evolution equations and partial differential equations of mixed type},
  author={Showalter, R E},
  journal={SIAM Journal on Mathematical Analysis},
  volume={6},
  number={1},
  pages={25--42},
  year={1975},
  publisher={SIAM}
}

@book{sohr,
  title={The Navier-Stokes Equations: An Elementary Functional Analytic Approach},
  author={Sohr, Hermann},
  year={2012},
  publisher={Springer}
}

@article{williams,
  title={The effects of plastic waves on the numerical convergence of the viscous--plastic and elastic--viscous--plastic sea-ice models},
  author={Williams, James and Tremblay, L Bruno and Lemieux, Jean-Fran{\c{c}}ois},
  journal={Journal of Computational Physics},
  volume={340},
  pages={519--533},
  year={2017},
  publisher={Elsevier}
}

@article{zhang,
  title={On an efficient numerical method for modeling sea ice dynamics},
  author={Zhang, Jinlun and Hibler III, WD},
  journal={Journal of Geophysical Research: Oceans},
  volume={102},
  number={C4},
  pages={8691--8702},
  year={1997},
  publisher={Wiley Online Library}
}

\end{document}